\documentclass[a4paper]{amsart}
\usepackage[T1]{fontenc}
\usepackage[utf8]{inputenc}
\usepackage{lmodern}

\usepackage{enumitem}
\setlist[enumerate,1]{label=\textup{(\arabic*)}}

\usepackage{mathtools, amssymb}

\usepackage{microtype}

\usepackage{hyperref}
\hypersetup{%
  bookmarksnumbered,
  bookmarksopen,
  breaklinks,
  pdfstartview=FitH,
  pdftitle={Cuspidal representations of reductive p-adic groups are relatively injective and projective},
  pdfauthor={Ralf Meyer},
  pdfsubject={Mathematics},
}
\usepackage[lite]{amsrefs}
\newcommand*{\MRref}[2]{ \href{http://www.ams.org/mathscinet-getitem?mr=#1}{MR \textbf{#1}}}

\theoremstyle{plain}
\newtheorem{theorem}{Theorem}[section]
\newtheorem{proposition}[theorem]{Proposition}
\newtheorem{lemma}[theorem]{Lemma}
\newtheorem{corollary}[theorem]{Corollary}

\theoremstyle{definition}
\newtheorem{definition}[theorem]{Definition}

\theoremstyle{remark}
\newtheorem{remark}[theorem]{Remark}

\DeclareMathOperator{\Hom}{Hom}
\DeclareMathOperator{\im}{im}

\newcommand{\twolinesubscript}[2]{\genfrac{}{}{0pt}{}{#1}{#2}}

\newcommand*{\nb}{\nobreakdash}

\newcommand*{\defeq}{\mathrel{\vcentcolon=}}
\newcommand*{\into}{\rightarrowtail}
\newcommand*{\prto}{\twoheadrightarrow}
\newcommand*{\blank}{\textup\textvisiblespace}

\newcommand*{\av}[1]{\langle#1\rangle}

\newcommand*{\N}{\mathbb N}
\newcommand*{\Q}{\mathbb Q}

\newcommand*{\Gl}{\textup{Gl}}
\newcommand*{\Id}{\textup{Id}}

\newcommand*{\Hecke}{\mathcal H}
\newcommand*{\Rep}{\mathfrak{Rep}}

\newcommand*{\Gz}[1][G]{{}^0 #1}
\newcommand*{\StabS}{G}

\newcommand*{\Buil}{\mathcal{BT}}

\newcommand*{\Pco}{F}

\begin{document}
\title[Cuspidal representations of reductive groups]{Cuspidal
  representations of reductive p-adic groups are relatively
  injective and projective}

\author{Ralf Meyer}
\email{rmeyer2@uni-goettingen.de}
\address{Mathematisches Institut\\
  Georg-August-Universit\"at G\"ottingen\\
  Bunsenstra\ss e 3--5\\
  37073 G\"ottingen\\
  Germany}

\begin{abstract}
  Cuspidal representations of a reductive \(p\)\nb-adic group~\(G\)
  over a field of characteristic different from~\(p\) are relatively
  injective and projective with respect to extensions that split by a
  \(U\)\nb-equivariant linear map for any subgroup~\(U\) that is
  compact modulo the centre.  The category of smooth representations
  over a field whose characteristic does not divide the pro-order
  of~\(G\) is the product of the subcategories of cuspidal
  representations and of subrepresentations of direct sums of
  parabolically induced representations.
\end{abstract}

\subjclass[2000]{22E50}
\maketitle

\section{Introduction}
\label{sec:intro}

Let~\(G\) be a reductive linear algebraic group over a non-Archimedean
local field with residue field of characteristic~\(p\); we briefly
call~\(G\) a \emph{reductive \(p\)\nb-adic group}.  Let~\(R\) be a
commutative ring in which~\(p\) is invertible.  Let \(\Rep_R(G)\) be
the category of smooth representations of~\(G\) on \(R\)\nb-modules.

A smooth representation is \emph{cuspidal} if it is killed by the
parabolic restriction functors for all proper parabolic subgroups.  We
call an extension \(V'\into V\prto V''\) in \(\Rep_R(G)\)
\emph{cmc-split exact} if it splits in \(\Rep_R(K)\) for any
subgroup~\(K\) of~\(G\) that is compact modulo the centre~\(Z(G)\)
of~\(G\) (cmc).  A smooth representation is \emph{cmc-projective} or
\emph{cmc-injective}, respectively, if it is projective or injective
with respect to cmc-split extensions.

\begin{theorem}
  \label{the:rel_projective_injective}
  Cuspidal representations are cmc-projective and cmc-injective.
\end{theorem}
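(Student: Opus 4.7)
My plan is to exhibit any cuspidal~$V$ as a direct summand of a compactly induced representation $\text{c-Ind}_K^G(V|_K)$, and dually of a smoothly induced $\text{Ind}_K^G(V|_K)$, for a subgroup~$K$ of~$G$ compact modulo~$Z(G)$; combined with the cmc-projectivity (respectively cmc-injectivity) of such induced modules, this yields the theorem.

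\emph{Cmc-projectivity and cmc-injectivity of induced modules.} Take $K \defeq Z(G) K_0$ for a compact open $K_0 \subseteq G$; then~$K$ is open and cmc. Frobenius reciprocity gives
\[
\Hom_G(\text{c-Ind}_K^G W, X) \cong \Hom_K(W, X|_K), \qquad \Hom_G(X, \text{Ind}_K^G W) \cong \Hom_K(X|_K, W).
\]
Restriction to~$K$ sends cmc-split extensions to split extensions in $\Rep_R(K)$ by the very definition of cmc-split, so both right-hand sides are exact in the $X$ variable on cmc-split extensions. Hence $\text{c-Ind}_K^G W$ is cmc-projective and $\text{Ind}_K^G W$ is cmc-injective for every $W \in \Rep_R(K)$.

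\emph{Cuspidal representations as direct summands.} The canonical counit $\epsilon\colon \text{c-Ind}_K^G(V|_K) \prto V$, $f \mapsto \sum_{Kh \in K\backslash G} h^{-1} f(h)$, is surjective, and the unit $\eta\colon V \into \text{Ind}_K^G(V|_K)$, $\eta(v)(g) = gv$, is injective. A $G$-equivariant section of~$\epsilon$ (equivalently, a retraction of~$\eta$) will follow from a formula assigning to each $v \in V$ a function $f_v \in \text{c-Ind}_K^G(V|_K)$ that is supported on finitely many $K$-cosets and satisfies the reproducing identity $\sum_{Kh} h^{-1} f_v(h) = v$. The natural matrix-coefficient ansatz fixes $v_0 \in V^{K_0}$ together with a suitable dual vector~$\tilde v_0$ and sets $f_v(h) = \av{v,\, h\tilde v_0} \cdot v_0$.

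\emph{The main obstacle.} The crux is to establish the reproducing identity in the generality of the theorem. Classically over~$\mathbb{C}$ and for irreducible cuspidals, both the finite-support condition and the reproducing identity follow from Schur orthogonality together with Harish-Chandra's theorem that matrix coefficients of cuspidal representations are compactly supported modulo~$Z(G)$. For arbitrary coefficient rings~$R$ and possibly non-irreducible~$V$, one must argue directly from the defining vanishing of Jacquet modules at proper parabolics, without the complex-analytic tools behind Schur orthogonality. This is the technical heart of the proof, and I expect it to proceed geometrically via the Bruhat--Tits building~$\Buil$, whose simplicial stabilisers are compact modulo the centre, providing a natural framework in which cuspidality translates to the required finiteness of support for orbit functions.
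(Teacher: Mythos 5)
There is a genuine gap, and you have correctly located it yourself: the ``reproducing identity'' is the entire content of the theorem, and the matrix-coefficient ansatz you propose cannot deliver it in this generality. With a single cmc subgroup $K$ and $f_v(h) = \av{v, h\tilde v_0}\, v_0$, the identity $\sum_{Kh} h^{-1} f_v(h) = v$ is essentially a formal-degree/Schur-orthogonality statement; it requires $V$ irreducible, Schur's Lemma (hence an algebraically closed field), and the invertibility of the formal degree in $R$ --- all of which fail for general coefficient rings $R$ with $p^{-1}\in R$ and for non-irreducible cuspidal $V$. So the argument as proposed establishes only the easy half (cmc-projectivity/injectivity of the induced modules via Frobenius reciprocity) and leaves the splitting unproved.

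The paper's resolution is worth contrasting with your single-$K$ plan. Instead of one subgroup, it uses the whole family of polysimplex stabilisers $G_\sigma$ of the Bruhat--Tits building: the cmc-projective object is $\Pco^n = \bigoplus_{\sigma\in\Buil} e^n_\sigma V$ (a sum of compact inductions from the various $G_\sigma$), and the section is the explicit \emph{alternating} sum $\alpha^n(v)(\sigma) = (-1)^{\dim(\sigma)} e^n_\sigma v$, which lands in the direct sum precisely because cuspidality forces $e^n_{g\sigma}v$ to vanish off finitely many cosets. The reproducing identity is then \emph{not} an orthogonality relation but the Euler-characteristic identity of Theorem~\ref{the:support_projections}: $\sum_\sigma (-1)^{\dim\sigma} e^n_\sigma$ is an idempotent with image $\sum_x e^n_x V$. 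This is the combinatorial substitute for Schur orthogonality that your sketch is missing. Two further points your outline does not address: (i) for $V$ not finitely generated, no single $n$ gives $\pi^n\alpha^n = \Id_V$, and the paper needs the telescoping sum over all $n$; (ii) on the injective side the induced module is a \emph{product} over the building rather than a sum, and extending $\pi^n$ to that product requires the stronger ``uniformly cuspidal'' property of Vign\'eras, plus an argument that the relevant product of summands of $V$ coincides with their direct sum inside the smooth vectors. Without these ingredients the proposal does not constitute a proof.
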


We call~\(R\) a \emph{good field} if it is a field whose
characteristic~\(\ell\) does not divide the pro-order of~\(G\); that
is, \(\ell\) does not divide \([U_1: U_2]\) for any compact open
subgroups \(U_2\subseteq U_1\subseteq G\).  Our theorem implies:

\begin{theorem}
  \label{the:enough_projective_injective}
  Any cuspidal representation over a good field is a quotient of a
  cuspidal projective representation and contained in a cuspidal
  injective representation.
\end{theorem}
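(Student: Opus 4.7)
I would handle the injective and projective halves separately. In each case, the idea is to produce the desired cuspidal object from an auxiliary injective or projective in the ambient category $\Rep_R(G)$. I use throughout that the parabolic restriction functors are exact, so cuspidality is preserved under subrepresentations, quotients, and colimits.

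For the injective half, I would first embed $V$ into an ordinary injective of $\Rep_R(G)$. Over a good field, the category of smooth representations of any compact open subgroup $U \subseteq G$ is semisimple, by Maschke applied to the finite quotients $U/U'$. Hence smooth induction $\text{Ind}_U^G W$ is injective in $\Rep_R(G)$ for every $W$, and the Frobenius unit gives $V \into I \defeq \text{Ind}_U^G V|_U$ for any such $U$. Let $I^{\text{cusp}} \subseteq I$ be the sum of all cuspidal subrepresentations of $I$. It contains $V$ and is cuspidal. It is also injective within the cuspidal subcategory: given a monomorphism $W' \into W$ of cuspidal representations and a map $W' \to I^{\text{cusp}}$, extend along $W' \into W$ to some $W \to I$ by injectivity of $I$; the image is a quotient of the cuspidal $W$, hence cuspidal, and therefore lies inside $I^{\text{cusp}}$.

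For the projective half, compact induction $\text{c-ind}_U^G V|_U$ is a projective object of $\Rep_R(G)$ whose counit surjects onto $V$, presenting $V$ as a quotient of a projective $P$. I would then like to replace $P$ by a largest cuspidal quotient $P^{\text{cusp}}$, still projective and still surjecting onto $V$. This is the crux of the argument and the point at which Theorem~\ref{the:rel_projective_injective} must enter decisively: the naive dual of the injective construction -- forming $P/N$ with $N$ the intersection of the kernels of all maps $P \to \text{cuspidal}$ -- need not yield a cuspidal object, since cuspidality is not manifestly preserved under arbitrary limits. The intended route is to show that over a good field the cuspidal representations form a direct summand subcategory of $\Rep_R(G)$, which is the product decomposition announced in the abstract and for which Theorem~\ref{the:rel_projective_injective} supplies the key homological input. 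The associated exact idempotent projection then preserves projectives and carries $P$ to the required $P^{\text{cusp}} \prto V$. Establishing this direct-summand structure is the main obstacle; once it is in hand, both halves of Theorem~\ref{the:enough_projective_injective} fall out formally.
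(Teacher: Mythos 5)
Your proposal has two genuine gaps, and together they make it circular relative to the role this theorem plays in the paper. In the injective half, you only establish that \(I^{\textup{cusp}}\) (the sum of all cuspidal subrepresentations of an injective \(I\supseteq V\)) is injective \emph{relative to monomorphisms between cuspidal representations}: your extension step produces some \(W\to I\), and you need its image to be cuspidal in order to land in \(I^{\textup{cusp}}\), which fails for general \(W\). The theorem asserts injectivity in \(\Rep_R(G)\), and that full strength is exactly what is used later: in Theorem~\ref{the:split_rep} the cuspidal injective must receive an extension of a map defined on the cuspidal kernel \(V'\) of an \emph{arbitrary} \(V\). Upgrading ``injective among cuspidals'' to ``injective'' needs the orthogonality of cuspidal and subinduced representations, i.e.\ Theorem~\ref{the:cuspidal_induced_orthogonal} or the product decomposition. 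In the projective half you concede the point outright: you defer to the product decomposition of \(\Rep_R(G)\) without proving it. But in the paper that decomposition (Theorem~\ref{the:split_rep}) and the orthogonality statement are \emph{consequences} of Theorem~\ref{the:enough_projective_injective}, via Corollaries \ref{cor:projective_field} and~\ref{cor:banal_field_injective}; taking it as an input assumes the theorem's strongest corollary.

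The paper's construction sidesteps the subinduced part of the category entirely. Let \(\Gz\) be the intersection of the kernels of the unramified characters: it is open and normal, contains every compact subgroup, \(G/\Gz\) is free abelian of finite rank, and \(Z(G)\cap\Gz\) is compact. Hence for \(\Gz\) the notions cmc and c coincide, and the support-projection argument behind Theorem~\ref{the:rel_projective_injective} shows that cuspidal representations of \(\Gz\) are c-projective and c-injective (Theorems \ref{the:c-projective_Gz} and~\ref{the:enough_c-injective}). Over a good field every extension is c-split (average a linear section over a compact open subgroup), so such representations are honestly projective and injective in \(\Rep_R(\Gz)\). One then takes \(W\defeq \textup{c-ind}_{\Gz}^G(V|_{\Gz})\), respectively \(W\defeq \textup{Ind}_{\Gz}^G(V|_{\Gz})\): the adjunction isomorphisms \(\Hom_G(W,\blank)\cong\Hom_{\Gz}(V,\blank)\) and \(\Hom_G(\blank,W)\cong\Hom_{\Gz}(\blank,V)\) are exact, \(W\) is cuspidal because its restriction to \(\Gz\) is built from copies of the cuspidal \(V|_{\Gz}\) indexed by the discrete group \(G/\Gz\) (Proposition~\ref{pro:cuspidal_criterion_circ}), and the counit, resp.\ unit, gives \(W\prto V\), resp.\ \(V\into W\). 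You should replace your induction from a compact open subgroup by induction from \(\Gz\) and invoke the relative (co)homological properties of cuspidal \(\Gz\)-representations directly; then no appeal to the category decomposition is needed.
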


This implies that the category of smooth representations of~\(G\) over
a good field is the product of the subcategory of cuspidal
representations and the subcategory of representations that are
contained in a sum of parabolically induced representations.  This
splitting is a crucial part of the Bernstein decomposition.

Our main theorem follows quickly from the theory of support
projections in~\cite{Meyer-Solleveld:Resolutions}.  We recall the
relevant notation and results in Section~\ref{sec:support}.
Section~\ref{sec:rel_projective} proves the assertions involving
relative projectivity and applies it to show that parabolically
induced representations have no cuspidal subquotients if~\(R\) is a
good field.  Section~\ref{sec:rel_injective} proves the assertions
about relative injectivity.  This implies that any smooth
representation over a good field is a direct sum of a cuspidal
representation and a subrepresentation of a parabolically induced
representation.

\section{Support projections}
\label{sec:support}

Let~\(\Buil\) be the affine Bruhat--Tits building of~\(G\).  We
treat~\(\Buil\) as a partially ordered set of polysimplices, with the
relation \(\sigma\prec\tau\) if~\(\sigma\) is a facet of~\(\tau\).
The group~\(G\) acts on~\(\Buil\).  We denote the stabiliser of
\(\sigma\in\Buil\) by
\[
\StabS_\sigma \defeq \{g\in G\mid g\sigma=\sigma\};
\]
its elements may permute the vertices of~\(\sigma\) non-trivially.

The group~\(\StabS_\sigma\) is open and compact modulo the
centre~\(Z(G)\) of~\(G\).  Any subgroup that is compact modulo the
centre is contained in~\(\StabS_\sigma\) for some \(\sigma\in\Buil\)
because it fixes some point in the geometric realisation of~\(\Buil\).
Hence an extension is cmc-split exact (see
Definition~\ref{def:split_extension}) if and only if it splits
\(\StabS_\sigma\)\nb-equivariantly for each \(\sigma\in\Buil\).

The normalised Haar measure on a compact, open, pro-\(p\) subgroup
\(U\subseteq G\) gives an idempotent element~\(\av{U}\) in the Hecke
algebra \(\Hecke = \Hecke(G,R)\) of~\(G\) with coefficients in~\(R\)
because \(p^{-1}\in R\).  Schneider and
Stuhler~\cite{Schneider-Stuhler:Rep_sheaves} and
Vign\'eras~\cite{Vigneras:Cohomology} use certain compact, open,
pro-\(p\) subgroups~\(U^n_\sigma\) for \(\sigma\in\Buil\),
\(n\in\N\) to construct resolutions for smooth representations
of~\(G\).  Let \(e_\sigma^n\defeq \av{U^n_\sigma}\).  For fixed
\(\sigma\in \Buil\), the groups \((U^n_\sigma)_{n\in\N}\) form a
neighbourhood basis of the unit in~\(G\).  Hence
\((e^n_\sigma)_{n\in\N}\) is an approximate unit in~\(\Hecke\).

\begin{theorem}
  \label{the:support_projections}
  Let \(\Sigma\subseteq\Buil\) be a finite, convex subcomplex.
  Define
  \[
  u^n_\Sigma
  \defeq \sum_{\sigma\in\Sigma} (-1)^{\dim(\sigma)} e^n_\sigma
  \in \Hecke.
  \]
  This element is idempotent and acts on any smooth representation
  such that
  \begin{equation}
    \label{eq:support_projection}
    \im(u^n_\Sigma) = \sum_{x\in\Sigma^0} \im(e^n_x),\qquad
    \ker(u^n_\Sigma) = \bigcap_{x\in\Sigma^0} \ker(e^n_x).
  \end{equation}
\end{theorem}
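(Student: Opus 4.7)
The plan is to deduce both idempotence and the image/kernel formulas from the convolution identities satisfied by the Schneider--Stuhler idempotents \(e^n_\sigma\), combined with an Euler characteristic computation on the convex subcomplex \(\Sigma\).

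The key algebraic input is an Iwahori-type identity: whenever two simplices \(\sigma, \tau\) of \(\Buil\) lie in a common apartment and have join \(\rho = \sigma \vee \tau\), one has \(e^n_\sigma \, e^n_\tau = e^n_\rho = e^n_\tau \, e^n_\sigma\) in \(\Hecke\). This follows from an elementary Fubini/coset computation once one knows that the product \(U^n_\sigma \cdot U^n_\tau\) is a group of the expected order, which is precisely the Iwahori decomposition property of the Schneider--Stuhler pro-\(p\) subgroups. In particular, \(\sigma \prec \tau\) gives \(e^n_\sigma e^n_\tau = e^n_\tau\). Convexity of \(\Sigma\) ensures that any two of its simplices lie in a common apartment of \(\Buil\) and that their join lies in \(\Sigma\), so every product \(e^n_\sigma e^n_\tau\) with \(\sigma, \tau \in \Sigma\) equals \(e^n_\rho\) for some \(\rho \in \Sigma\).

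With this in hand, idempotence reduces to a combinatorial calculation. Expanding
\[
(u^n_\Sigma)^2
= \sum_{\sigma,\tau \in \Sigma} (-1)^{\dim\sigma+\dim\tau}\, e^n_\sigma e^n_\tau
= \sum_{\rho \in \Sigma} c_\rho\, e^n_\rho,
\qquad
c_\rho \defeq \sum_{\twolinesubscript{\sigma,\tau \prec \rho}{\sigma \vee \tau = \rho}} (-1)^{\dim\sigma + \dim\tau},
\]
the task is to show \(c_\rho = (-1)^{\dim \rho}\) for every \(\rho \in \Sigma\). I would verify this by an elementary inclusion--exclusion on the vertex set \(\rho^0\): each vertex of \(\rho\) must lie in at least one of the subsets \(\sigma^0, \tau^0\), contributing a factor of \(-1\), and after correcting for the two degenerate pairs where \(\sigma\) or \(\tau\) would be empty the total equals \((-1)^{\dim \rho}\).

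The image and kernel formulas then follow from the companion identity \(u^n_\Sigma e^n_x = e^n_x u^n_\Sigma = e^n_x\) for each vertex \(x \in \Sigma^0\), proved by the same grouping trick. Expanding \(e^n_x u^n_\Sigma = \sum_\sigma (-1)^{\dim \sigma} e^n_{x \vee \sigma}\) and grouping by \(\rho = x \vee \sigma\), the contributing \(\sigma\) for a fixed \(\rho \ni x\) are \(\sigma = \rho\) and \(\sigma = \rho \setminus \{x\}\), whose signs cancel when \(\dim \rho \geq 1\) and contribute \(1\) only when \(\rho = \{x\}\), yielding \(e^n_x\). This gives \(\im(e^n_x) \subseteq \im(u^n_\Sigma)\) and \(\ker(u^n_\Sigma) \subseteq \ker(e^n_x)\); the reverse inclusions follow because every \(e^n_\sigma\) factors as a product \(e^n_{x_1}\cdots e^n_{x_k}\) over the vertices of \(\sigma\), so \(\im(e^n_\sigma) \subseteq \bigcap_{x \in \sigma^0} \im(e^n_x) \subseteq \sum_{x \in \Sigma^0}\im(e^n_x)\) and \(\bigcap_{x \in \Sigma^0}\ker(e^n_x) \subseteq \ker(e^n_\sigma)\) for each \(\sigma \in \Sigma\). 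The main obstacle I anticipate is the Iwahori-type identity \(e^n_\sigma e^n_\tau = e^n_{\sigma \vee \tau}\) for incomparable \(\sigma, \tau\)---the technical heart of the Schneider--Stuhler construction, where convexity of \(\Sigma\) is indispensable---after which the remaining combinatorics collapses cleanly by inclusion--exclusion on face posets.
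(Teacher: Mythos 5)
There is a genuine gap, and it sits exactly where you located the ``technical heart'': your key identity \(e^n_\sigma e^n_\tau = e^n_{\sigma\vee\tau}\) presupposes that any two polysimplices \(\sigma,\tau\) of the finite convex subcomplex \(\Sigma\) have a join, i.e.\ span a common polysimplex of \(\Sigma\). Convexity does not give this. It guarantees that geodesics between points of \(\Sigma\) stay in \(\Sigma\), not that all of \(\Sigma\) fits into the star of a single polysimplex: already a convex path of two or more edges in an apartment contains vertices \(x,y\) that are not adjacent, so \(\sigma\vee\tau\) simply does not exist. For such pairs \(\av{U^n_x}\av{U^n_y}\) is not an idempotent of the form \(e^n_\rho\) at all, so the cross terms in your expansion of \((u^n_\Sigma)^2\) do not collapse, the coefficients \(c_\rho\) are not defined, and the same problem kills the identity \(e^n_x u^n_\Sigma = e^n_x\) (your expansion \(\sum_\sigma(-1)^{\dim\sigma}e^n_{x\vee\sigma}\) requires \(x\) to be adjacent to every \(\sigma\in\Sigma\)). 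Your argument is essentially correct only in the degenerate case where \(\Sigma\) is the set of faces of one polysimplex; there the inclusion--exclusion over vertex subsets does produce \(c_\rho=(-1)^{\dim\rho}\), and the deduction of the image and kernel formulas from idempotence together with \(u^n_\Sigma e^n_x = e^n_x u^n_\Sigma = e^n_x\) and the factorisation \(e^n_\sigma=\prod_{x\prec\sigma}e^n_x\) is sound.

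For general finite convex \(\Sigma\) one needs a different and substantially harder input, which is why the paper does not prove this theorem from scratch but cites \cite{Meyer-Solleveld:Resolutions}*{Theorem 2.12}: the verification there rests not only on commutation of \(e^n_x\) and \(e^n_y\) for adjacent vertices, but on the extra relation of the type \(e^n_x e^n_z e^n_y = e^n_x e^n_y\) for a vertex \(z\) lying between \(x\) and \(y\) (the conditions of \cite{Meyer-Solleveld:Resolutions}*{Definition 2.1}), together with an induction over the subcomplex \(\Sigma\) that peels off polysimplices one at a time. Without some substitute for that ``projection along geodesics'' relation, the far-apart cross terms in \((u^n_\Sigma)^2\) cannot be controlled, so the proposal as written does not establish the theorem.
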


\begin{proof}
  The idempotents \((e_x^n)_{x\in\Buil^0}\) satisfy the conditions
  in \cite{Meyer-Solleveld:Resolutions}*{Definition 2.1}, and
  \(e^n_\sigma = \prod_{x\prec \sigma} e^n_x\) for
  \(\sigma\in\Buil\).  Hence everything follows from
  \cite{Meyer-Solleveld:Resolutions}*{Theorem 2.12}.
\end{proof}

The idempotent~\(u^n_\Sigma\) is called the \emph{support
  projection} of~\(\Sigma\) in~\cite{Meyer-Solleveld:Resolutions}.

\section{Relative projectivity of cuspidal representations}
\label{sec:rel_projective}

\begin{definition}
  A representation \(V\in\Rep_R(G)\) is \emph{cuspidal} if it is
  killed by the parabolic restriction functor for any proper parabolic
  subgroup.
\end{definition}

\begin{proposition}[\cite{Vigneras:l-modulaires}*{II.2.7}]
  \label{pro:cuspidal_criterion}
  A representation~\(V\) is cuspidal if and only if for each compact
  open pro-\(p\) subgroup \(U\subseteq G\) and each \(v\in V\), the
  set of \(g\in G\) with \(\av{U} g v\neq0\) is compact
  modulo~\(Z(G)\).
\end{proposition}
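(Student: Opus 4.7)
The plan is to relate the support condition on $g \mapsto \av{U} g v$ to the vanishing of the Jacquet modules $V_N = V/V(N)$ attached to unipotent radicals $N$ of proper parabolic subgroups $P = MN$, via Casselman's lemma in its $\ell$\nb-modular form. The central tool is a pro-$p$ open compact subgroup $U$ admitting an Iwahori factorisation $U = U^- \cdot U_M \cdot U^+$ relative to $P$ (with $U^{\pm}$ contained in $N$ and its opposite, and $U_M = U \cap M$), paired with a strictly positive element $a \in Z(M)$ satisfying $a U^+ a^{-1} \subseteq U^+$ and $\bigcap_k a^k U^+ a^{-k} = \{1\}$. Because $P$ is proper, the iterates $a^k$ eventually leave every compact subset of $G/Z(G)$.

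For the implication \emph{criterion $\Rightarrow$ cuspidal}, I would fix a proper parabolic $P = MN$ and a vector $v \in V^U$ with $U$ as above. The support hypothesis forces $\av{U} a^k v = 0$ for $k$ sufficiently large. Factoring $\av{U} = \av{U^-} \av{U_M} \av{U^+}$ and using $\av{U^+} v = v$, Casselman's lemma converts this vanishing into $\av{a^{-k} U^+ a^k} v = 0$. Setting $N_0 \defeq a^{-k} U^+ a^k$, this says $\int_{N_0} n v\, dn = 0$, i.e., $v \in V(N)$. Hence $v$ dies in $V_N$ for every proper $P = MN$, and $V$ is cuspidal.

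For the converse, assume $V$ cuspidal and fix $v$ together with a pro-$p$ open compact $U$. I would use the Cartan decomposition $G = K A^+ K$, with $K$ a maximal compact subgroup containing $U$ and $A$ a maximal split torus, to reduce the control of $\{g : \av{U} g v \neq 0\}/Z(G)$ to bounding the set of dominant $a \in A^+$ with $\av{U} a v \neq 0$. For each proper standard parabolic $P = MN$, cuspidality gives $v \in V(N)$, and Casselman's lemma run in the opposite direction forces $\av{U} a v = 0$ whenever $a$ contracts $N$ sufficiently. Since the asymptotic directions in $A/(A \cap Z(G))$ are covered by the directions contracting some proper unipotent radical, the support is confined to a compact subset of $G/Z(G)$.

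The main technical obstacle is Casselman's lemma itself: the equivalence, for $v \in V^U$, between the algebraic condition $v \in V(N)$ and the vanishing $\av{U} a v = 0$ for $a \in Z(M)$ sufficiently dominant. In the modular setting this must be established without the Harish-Chandra style analytic arguments available over $\mathbb{C}$, which is precisely the content worked out in \cite{Vigneras:l-modulaires}*{II.2}. Accepting that lemma, the rest of the proof is essentially bookkeeping with Iwahori factorisations and with the geometry of the positive cone.
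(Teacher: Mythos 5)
The paper does not actually prove this proposition: it is imported verbatim from \cite{Vigneras:l-modulaires}*{II.2.7} and used as a black box, so there is no in-paper argument to compare yours against. Your sketch is essentially the classical Jacquet--Casselman proof that underlies that citation --- Iwahori factorisation of a small pro-$p$ subgroup relative to $P=MN$, the Jacquet lemma relating $v\in V(N)$ to the vanishing of $\av{U}a^k v$ for strictly dominant $a\in Z(M)$, and the Cartan decomposition to confine the support --- and as an outline it is correct. You are candid that the one genuinely hard ingredient, the $\ell$\nb-modular form of the Jacquet lemma, is deferred to \cite{Vigneras:l-modulaires}*{II.2}; since that is the same place the proposition itself is proved, your argument is not independent of the source, but it is a faithful reconstruction of how the source proceeds rather than a circular appeal to the statement being proved.

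Two bookkeeping points you should make explicit in a full write-up. First, in the direction ``criterion $\Rightarrow$ cuspidal'' you only treat $v\in V^U$ for a $U$ admitting an Iwahori factorisation relative to the given $P$; you need that such $U$ form a neighbourhood basis of the identity, and that $V(N)$ is exactly the set of $v$ killed by $\av{N_0}$ for some compact open $N_0\subseteq N$ --- the latter uses $p^{-1}\in R$ and that $N$ is an increasing union of compact open pro-$p$ subgroups. Second, in the converse, the reduction from $g=kak'$ to dominant $a$ alone uses the smoothness of $v$ under $K$ and the finiteness of the relevant set of conjugates of $U$ in $K$, and the final step needs that the dominant cone, away from a compact-mod-centre set, is covered by finitely many subcones each of which strictly contracts the unipotent radical of some proper standard parabolic. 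Neither point is a gap, but both are where a careless write-up would wobble, and both must be checked for arbitrary smooth $R$\nb-representations (no admissibility is available here).
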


Since \(e^n_\sigma= U^n_\sigma\) and the
subgroups~\(U^n_\sigma\) form a neighbourhood basis in~\(G\), \(V\)
is cuspidal if and only if the set of \(g\in G\) with \(e^n_\sigma g
v\neq0\) is compact modulo the centre of~\(G\) for all \(n\in\N\),
\(\sigma\in\Buil\), \(v\in V\).  The set of these \(g\in G\) is closed
under multiplication by elements of the centre~\(Z(G)\) because
\(e^n_\sigma z g v = z e^n_\sigma g v = 0\) if and only if
\(e^n_\sigma g v=0\); and it is closed under left multiplication by
elements of~\(U^n_\sigma\).  Thus being cuspidal means that for each
\(n,\sigma,v\), the set of \(g\in G\) with \(e^n_\sigma g v\neq0\)
consists of only finitely many cosets~\(U^n_\sigma Z(G) g\).

\begin{definition}
  \label{def:split_extension}
  An \emph{extension} \(W'\into W\prto W''\) in \(\Rep_R(G)\) consists
  of smooth representations \(W'\), \(W\) and~\(W''\) of~\(G\) on
  \(R\)\nb-modules and \(G\)\nb-equivariant \(R\)\nb-module maps
  \(i\colon W'\to W\) and \(p\colon W\to W''\) such that \(i\) is
  injective, \(p\) is surjective, and \(i(W')\) is the kernel
  of~\(p\).  This extension is \emph{c-split} if there is a
  \(U\)\nb-equivariant \(R\)\nb-module map \(s\colon W''\to W\) with
  \(p\circ s=\Id_{W''}\) for each compact subgroup \(U\subseteq G\),
  and \emph{cmc-split} if the same happens for each subgroup
  \(U\subseteq G\) for which \(U/(Z(G)\cap U)\) is compact.
\end{definition}

\begin{definition}
  \label{def:rel_projective}
  An object~\(V\) of \(\Rep_R(G)\) is \emph{projective},
  \emph{c-projective} or \emph{cmc-projective} if the functor
  \(\Hom_{R,G}(V,\blank)\) is exact on all extensions, all c-split
  extensions or all cmc-split extensions in \(\Rep_R(G)\),
  respectively.  It is \emph{injective}, \emph{c-injective} or
  \emph{cmc-injective} if \(\Hom_{R,G}(\blank,V)\) is exact on the
  appropriate extensions.
\end{definition}

\begin{theorem}
  \label{the:cmc-projective}
  Cuspidal representations in \(\Rep_R(G)\) are cmc-projective.
\end{theorem}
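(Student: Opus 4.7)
The plan is to construct, given a \(G\)-equivariant map \(f\colon V\to W''\) with \(V\) cuspidal and a cmc-split surjection \(p\colon W\prto W''\), an explicit \(G\)-equivariant lift \(\tilde f\colon V\to W\). The main tool is Theorem~\ref{the:support_projections}: choosing a finite convex subcomplex \(\Sigma\subseteq\Buil\) containing a simplex~\(\sigma\) with \(e^n_\sigma v=v\) gives \(u^n_\Sigma v=v\), so I may lift each summand \(e^n_\tau f(v)\) of the resulting decomposition of \(f(v)\) via a splitting on the stabiliser \(\StabS_\tau\), and combine the lifts with signs \((-1)^{\dim\tau}\).

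For each \(\sigma\in\Buil\), \(\StabS_\sigma\) is compact modulo \(Z(G)\) (Section~\ref{sec:support}), so the cmc-split hypothesis supplies a \(\StabS_\sigma\)-equivariant \(R\)-linear splitting \(s_\sigma\colon W''\to W\) of \(p\). Choosing \(s_\sigma\) for one representative of each \(G\)-orbit of simplices and extending by \(s_{g\sigma}\defeq g\, s_\sigma\, g^{-1}\) (well-defined thanks to the \(\StabS_\sigma\)-equivariance of \(s_\sigma\)) yields a family with \(s_{g\sigma}=g\, s_\sigma\, g^{-1}\) for all \(g\in G\) and \(\sigma\in\Buil\). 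Given \(v\in V\), pick \(n,\sigma\) with \(e^n_\sigma v=v\). Applying Proposition~\ref{pro:cuspidal_criterion} orbit by orbit (using that \(G\) has finitely many orbits of simplices on \(\Buil\) and that \(Z(G)\subseteq\StabS_\tau\) for every \(\tau\), so orbits of simplices are discrete), one sees that \(T(v,n)\defeq\{\tau\in\Buil:e^n_\tau v\neq 0\}\) is finite. Pick a finite convex subcomplex \(\Sigma\subseteq\Buil\) with \(\sigma\in\Sigma\) and \(T(v,n)\subseteq\Sigma\) and define
\[
\tilde f(v)\defeq\sum_{\tau\in\Sigma}(-1)^{\dim\tau}\,s_\tau\bigl(e^n_\tau f(v)\bigr).
\]
By Theorem~\ref{the:support_projections}, \(p\tilde f(v)=u^n_\Sigma f(v)=f(u^n_\Sigma v)=f(v)\) since \(v\in\im(e^n_\sigma)\subseteq\im(u^n_\Sigma)\). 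Independence of \(\Sigma\) is immediate: enlarging \(\Sigma\) contributes only terms with \(e^n_\tau v=0\), hence \(s_\tau(f(e^n_\tau v))=0\). The \(G\)-equivariance \(\tilde f(gv)=g\tilde f(v)\) follows from \(T(gv,n)=gT(v,n)\) together with \(s_{g\tau}=g\,s_\tau\,g^{-1}\), while \(R\)-linearity is manifest from the formula.

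The main obstacle is to verify that \(\tilde f(v)\) is independent of the choice \((n,\sigma)\) used to represent \(v\). Given a second representation \(v=e^{n'}_{\sigma'}v\), one reduces (by enlarging \(\Sigma\) suitably and using that \(e^n_\sigma v=v\) implies \(e^{n+1}_\sigma v=v\)) to the case of the same \(\sigma,\Sigma\) but with \(n\) replaced by \(n+1\); the compatibility to prove becomes
\[
\sum_{\tau\in\Sigma}(-1)^{\dim\tau}\,s_\tau\bigl((e^{n+1}_\tau-e^n_\tau)f(v)\bigr)=0.
\]
I expect this vanishing to follow from a telescoping argument using the idempotent identities \(e^n_\tau e^{n+1}_\tau=e^n_\tau=e^{n+1}_\tau e^n_\tau\) together with a second, more refined application of Theorem~\ref{the:support_projections} to express the difference \((e^{n+1}_\tau-e^n_\tau)v\) via support projections on enlarged subcomplexes, combined with the \(\StabS_\tau\)-equivariance of \(s_\tau\) to align the lifts.
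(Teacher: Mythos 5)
Your construction reproduces, in unpacked form, the paper's maps \(\alpha^n(v)(\tau)=(-1)^{\dim\tau}e^n_\tau v\) and \(\pi^n\), but it founders exactly where you flag the ``main obstacle'': the independence of \(\tilde f(v)\) from the chosen level~\(n\) is not only unproven, it is false for legitimate choices of the splittings \(s_\tau\). Writing \(w_\tau\defeq(e^{n+1}_\tau-e^n_\tau)f(v)\), Theorem~\ref{the:support_projections} gives \(\sum_{\tau\in\Sigma}(-1)^{\dim\tau}w_\tau=u^{n+1}_\Sigma f(v)-u^n_\Sigma f(v)=0\), but your discrepancy is \(\sum_\tau(-1)^{\dim\tau}s_\tau(w_\tau)\), and the \(s_\tau\) are \emph{different} maps, agreeing only modulo \(W'=\ker p\). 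Concretely, if the extension happens to be split with canonical splitting \(s\), you are free to take \(s_\tau=s+\delta_\tau\) with \(\delta_\tau\colon W''\to W'\) any \(\StabS_\tau\)-equivariant maps satisfying \(\delta_{g\tau}=g\delta_\tau g^{-1}\) (say nonzero on a single orbit); then the discrepancy is \(\sum_\tau(-1)^{\dim\tau}\delta_\tau(w_\tau)\), which has no reason to vanish. No telescoping of the idempotent identities \(e^n_\tau e^{n+1}_\tau=e^n_\tau\) can repair this, because the obstruction lives in \(W'\), which those identities never see. Without independence of \(n\), your \(\tilde f\) is not even a well-defined function (additivity compares the values at different levels), so the argument has a genuine gap at its central step.

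The fix is precisely the telescoping over \(n\) that the paper performs. Either pass to the retract formulation: the maps \(\alpha^n,\pi^n\) exhibit \(\sum_x e^n_x V\) as a summand of the cmc-projective \(\Pco^n\defeq\bigoplus_\sigma e^n_\sigma V\), and the corrected section \(\alpha(v)_n\defeq\alpha^n\bigl((\Id_V-\pi^{n-1}\alpha^{n-1})v\bigr)\) makes all of \(V\) a retract of \(\bigoplus_n\Pco^n\), whence cmc-projectivity is formal. Or, staying with your explicit lift, replace your formula by
\[
\tilde f(v)\defeq\sum_{n\ge0}\sum_{\tau\in\Buil}(-1)^{\dim\tau}\,s_\tau\Bigl(e^n_\tau f\bigl((\Id_V-\pi^{n-1}\alpha^{n-1})v\bigr)\Bigr),
\]
which involves no choice of \((n,\sigma,\Sigma)\), is finite in both sums (cuspidality for the inner one, smoothness of \(v\) for the outer one), is visibly \(R\)-linear and \(G\)-equivariant given \(s_{g\tau}=gs_\tau g^{-1}\), and satisfies \(p\tilde f(v)=\sum_n f\bigl((\pi^n\alpha^n-\pi^{n-1}\alpha^{n-1})v\bigr)=f(v)\). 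The rest of your setup (equivariant family of splittings indexed by orbits, finiteness of the support via Proposition~\ref{pro:cuspidal_criterion}, the computation \(p\tilde f=f\) via Theorem~\ref{the:support_projections}) is sound and matches the paper's Lemmas \ref{lem:rel_projective} and~\ref{lem:alpha}.
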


\begin{theorem}
  \label{the:enough_c-projective}
  For any cuspidal representation~\(W''\) in~\(\Rep_R(G)\) there is a
  c-split extension \(W'\into W\prto W''\) with cuspidal and
  c-projective~\(W\).
\end{theorem}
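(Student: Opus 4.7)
The plan is to build $W$ as a direct sum of suitably truncated compactly induced representations. For the c-projectivity ingredient, I would invoke Frobenius reciprocity: $\Hom_{R,G}(\Hecke e^n_\sigma,X)\cong X^{U^n_\sigma}$. Since $U^n_\sigma$ is compact, a c-split extension admits a $U^n_\sigma$-equivariant splitting (average any set-theoretic section by $\av{U^n_\sigma}$), so $(\blank)^{U^n_\sigma}$ is exact on c-split extensions and each $\Hecke e^n_\sigma\cong\textup{c-ind}^G_{U^n_\sigma}\mathbf{1}$ is c-projective. For $v\in W''^{U^n_\sigma}$ the assignment $e^n_\sigma\mapsto v$ extends to a $G$-equivariant map $\Hecke e^n_\sigma\to W''$ with image $\Hecke v$; summing over a generating family of such triples $(n,\sigma,v)$ produces a c-split surjection $W_0\twoheadrightarrow W''$ with $W_0$ c-projective.

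To upgrade $W_0$ to a cuspidal $W$, I would shrink each summand using the support projections of Section~\ref{sec:support}. By Proposition~\ref{pro:cuspidal_criterion}, for each $v\in W''^{U^n_\sigma}$ only finitely many cosets $U^n_\sigma Z(G)g$ satisfy $e^n_\sigma g v\neq 0$, so there is a finite convex subcomplex $\Sigma\subseteq\Buil$ containing the relevant vertices, to which the idempotent $u^n_\Sigma$ of Theorem~\ref{the:support_projections} is attached. The plan is to replace each summand $\Hecke e^n_\sigma$ by a cuspidal, $G$-stable, c-projective subrepresentation $P_v$ cut out using $u^n_\Sigma$, still mapping onto $\Hecke v\subseteq W''$, and then take $W=\bigoplus_v P_v$.

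The hard part will be producing the $P_v$. The support projection $u^n_\Sigma$ is an idempotent in $\Hecke$ but is not central, so left multiplication by $u^n_\Sigma$ on $\Hecke e^n_\sigma$ gives only an $R$-linear direct summand, not a $G$-invariant subrepresentation. To obtain a genuinely $G$-stable cuspidal summand, I expect one needs to combine the support projection with averaging over $G$-translates of $\Sigma$ (or over the stabiliser $\StabS_\Sigma$, which is compact modulo $Z(G)$), exploiting the cocompactness mod $Z(G)$ of the $G$-action on finite convex subcomplexes of $\Buil$. The finite-support property of cuspidal $W''$ is what makes the resulting orbit sum finite and hence yields cuspidality of $P_v$. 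This averaging construction parallels the proof of Theorem~\ref{the:cmc-projective} and will be the technical heart of the argument; once the $P_v$ are in hand, c-splitness of $W\twoheadrightarrow W''$ is inherited from that of $W_0\twoheadrightarrow W''$.
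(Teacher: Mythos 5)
Your first paragraph is fine but carries none of the theorem's content: the c-split surjection $\bigoplus \Hecke e^n_\sigma \prto W''$ from a c-projective object is standard, and the entire difficulty is making the cover \emph{cuspidal}. That is exactly the step you defer as the ``technical heart,'' and it is where the argument breaks down. The representation $\Hecke e^n_\sigma \cong \textup{c-ind}_{U^n_\sigma}^G\mathbf 1$ is very far from cuspidal (it surjects onto unramified non-cuspidal representations), and over a general coefficient ring $R$ there is no reason for it to contain a cuspidal $G$\nb-stable subrepresentation still surjecting onto $\Hecke v$ --- producing one would essentially be the theorem itself. The mechanism you hope to use cannot supply it: $u^n_\Sigma$ is not $G$\nb-equivariant, and the ``averaging over $G$\nb-translates of $\Sigma$'' that would repair this converges only when the operators act on a representation that is \emph{already} cuspidal --- that finiteness is precisely what Proposition~\ref{pro:cuspidal_criterion} and Lemma~\ref{lem:alpha} extract from cuspidality of the target $V$, and it fails inside $\Hecke e^n_\sigma$, where the orbit sums are infinite. (A secondary gap: even granting $G$\nb-stable summands $P_v\subseteq\Hecke e^n_\sigma$, composing a $U$\nb-equivariant section of $W_0\prto W''$ with the projection onto $\bigoplus P_v$ need not remain a section.)

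The paper takes a different route that you should compare with. It introduces the open normal subgroup $\Gz\subseteq G$ (intersection of kernels of unramified characters), for which $Z(G)\cap\Gz$ is compact and $G/\Gz$ is a finite-rank lattice. Because the relevant compact-mod-centre subgroups of $\Gz$ are actually compact, the cmc-projectivity argument of Theorem~\ref{the:cmc-projective} shows that cuspidal representations of $\Gz$ are already \emph{c-projective} in $\Rep_R(\Gz)$ (Theorem~\ref{the:c-projective_Gz}). One then sets $W=\textup{c-ind}_{\Gz}^G\bigl(W''|_{\Gz}\bigr)=\bigoplus_{G/\Gz}W''$: this is cuspidal by the $\Gz$\nb-criterion of Proposition~\ref{pro:cuspidal_criterion_circ}, c-projective by adjunction $\Hom_G(W,\blank)\cong\Hom_{\Gz}(W'',\blank)$, and the counit $W\prto W''$ is c-split because every compact subgroup of $G$ lies in $\Gz$. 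In short, cuspidality of the cover is obtained by inducing from the large subgroup $\Gz$ rather than from compact open subgroups; no truncation by support projections can substitute for this.
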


We are going to prove Theorems \ref{the:cmc-projective}
and~\ref{the:enough_c-projective} in Sections \ref{sec:cmc-projective}
and~\ref{sec:enough_c-projective}, respectively.  In
Section~\ref{sec:good_characteristic_projective}, we specialise to the
case where~\(R\) is a good field and deduce an orthogonality result
for cuspidal and parabolically induced representations.

\subsection{Cuspidal representations are cmc-projective}
\label{sec:cmc-projective}

We are going to prove Theorem~\ref{the:cmc-projective} about
cuspidal representations being cmc-projective.  Let~\(V\) be a
cuspidal representation, so it verifies the condition in
Proposition~\ref{pro:cuspidal_criterion}.  Fix \(n\in\N\).  Let
\(e^n_\sigma V\subseteq V\) for \(\sigma\in\Buil\) be the image
of~\(e^n_\sigma\) on~\(V\).  Let
\[
\Pco^n \defeq \bigoplus_{\sigma\in\Buil} e^n_\sigma V.
\]
Elements of~\(\Pco^n\) are functions \(\psi\colon \Buil\to V\) with
finite support and \(e^n_\sigma \psi(\sigma)= \psi(\sigma)\) for all
\(\sigma\in\Buil\).  We let~\(G\) act on~\(\Pco^n\) by
\((g\psi)(\sigma) \defeq g\cdot (\psi(g^{-1}\sigma))\) for \(g\in G\),
\(\sigma\in\Buil\), \(\psi\in \Pco^n\).  This belongs
to~\(\Pco^n\) again because \(e^n_{g\sigma} = g e^n_\sigma
g^{-1}\), and it defines a smooth representation, so
\(\Pco^n\in\Rep_R(G)\).

\begin{lemma}
  \label{lem:rel_projective}
  The smooth representation~\(\Pco^n\) is cmc-projective.
\end{lemma}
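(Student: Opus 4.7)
The plan is to observe that $\Pco^n$ is built out of compactly induced representations from the stabilisers $G_\sigma$, after which cmc-projectivity becomes the near-trivial assertion that any cmc-split extension splits $G_\sigma$-equivariantly (which holds by definition, since $G_\sigma$ is itself compact modulo the centre).

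First I would use the identity $e^n_{g\sigma} = g e^n_\sigma g^{-1}$ to see that the $G$-action on $\Pco^n$ permutes the summands $e^n_\sigma V$ according to the action on $\Buil$. Grouping summands by $G$-orbits and choosing a representative $\sigma$ for each orbit, the orbit-summand $\bigoplus_{\tau\in G\sigma} e^n_\tau V$ is canonically isomorphic as a smooth $G$-representation to the compactly (finite-support) induced representation from $G_\sigma$ to $G$ of the $G_\sigma$-representation $e^n_\sigma V$. Frobenius reciprocity then produces a natural isomorphism
\[
\Hom_{R,G}(\Pco^n, W) \cong \prod_{[\sigma]\in G\backslash\Buil} \Hom_{R,G_\sigma}(e^n_\sigma V, W)
\]
for every $W\in\Rep_R(G)$.

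Since each $G_\sigma$ is compact modulo $Z(G)$, the observation recorded in Section~\ref{sec:support} guarantees that any cmc-split extension $W'\into W\prto W''$ admits a $G_\sigma$-equivariant section. The functor $\Hom_{R,G_\sigma}(e^n_\sigma V,\blank)$ therefore sends such an extension to a short exact sequence of $R$-modules, and a product of exact functors is exact; hence $\Hom_{R,G}(\Pco^n,\blank)$ is exact on cmc-split extensions, which is exactly the cmc-projectivity of $\Pco^n$. The only step with any real content is the identification of each orbit-summand with an induction, and even that essentially unpacks the definition of $\Pco^n$; notice that cuspidality of $V$ is not used here and must enter at a later stage, presumably to produce a $G$-equivariant retraction of $\Pco^n$ onto $V$ that will transfer cmc-projectivity from $\Pco^n$ to $V$.
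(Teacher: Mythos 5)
Your argument is correct and is essentially the paper's own proof: decompose $\Pco^n$ over the finite set of $G$\nb-orbits, identify each orbit-summand with compact induction of $e^n_\sigma V$ from the open subgroup $\StabS_\sigma$, apply the adjunction with restriction, and use that $\StabS_\sigma/Z(G)$ is compact so that cmc-split extensions split $\StabS_\sigma$\nb-equivariantly. Your closing remark that cuspidality of $V$ is not needed here and only enters later, to build the retraction onto $V$, also matches the structure of the paper.
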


\begin{proof}
  We may decompose~\(\Pco^n\) as a sum over the finite set of
  \(G\)\nb-orbits \(G\backslash\Buil\), where each summand is the
  subspace~\(\Pco_\sigma^n\) of~\(\Pco^n\) of functions supported
  on \(G\sigma\cong G/\StabS_\sigma\).  When we apply compact
  induction from~\(\StabS_\sigma\) to~\(G\) to the representation
  of~\(\StabS_\sigma\) on the \(\StabS_\sigma\)\nb-invariant subspace
  \(e^n_\sigma V\subseteq V\), we get the
  representation~\(\Pco_\sigma^n\).  Since compact induction for the
  open subgroup \(\StabS_\sigma\subseteq G\) is left adjoint to
  restriction, we get
  \[
  \Hom_G(\Pco_\sigma, \blank) \cong
  \Hom_{\StabS_\sigma}(e^n_\sigma V,\blank).
  \]
  Since the groups \(\StabS_\sigma/Z(G)\) are compact, this functor is
  exact on cmc-split extensions.
\end{proof}

The inclusion maps \(e^n_\sigma V\hookrightarrow V\) define a
\(G\)\nb-equivariant \(R\)\nb-module homomorphism \(\pi^n\colon
\Pco^n \to V\).  Since~\(V\) is cuspidal, we also get a map in the
opposite direction:

\begin{lemma}
  \label{lem:alpha}
  Let~\(V\) be a cuspidal representation.  There is a
  \(G\)\nb-equivariant \(R\)\nb-module homomorphism \(\alpha^n\colon
  V\to \Pco^n\) defined by
  \[
  \alpha^n(v)(\sigma) \defeq (-1)^{\dim(\sigma)} e^n_\sigma(v)
  \qquad\text{for }v\in V,\ \sigma\in\Buil.
  \]
  The map \(\pi^n\circ \alpha^n\colon V\to V\) is idempotent with
  image \(\sum_{x\in\Buil^0} e^n_x V\).
\end{lemma}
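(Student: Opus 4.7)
The plan is to check three things in sequence: that $\alpha^n(v)$ has finite support so that $\alpha^n$ actually lands in $\Pco^n$; that $\alpha^n$ is $G$\nb-equivariant; and that $\pi^n\circ\alpha^n$ is the asserted idempotent projection, which will follow essentially from Theorem~\ref{the:support_projections} once the first two points are in place.

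For well-definedness, the relation $e^n_\sigma \alpha^n(v)(\sigma) = \alpha^n(v)(\sigma)$ is immediate from $(e^n_\sigma)^2 = e^n_\sigma$. Since the orbit space $G\backslash\Buil$ is finite, finite support reduces, for each orbit representative~$\sigma_0$, to bounding the $\sigma = h\sigma_0$ with $e^n_{h\sigma_0} v \neq 0$. The identity $e^n_{h\sigma_0} = h e^n_{\sigma_0} h^{-1}$ turns this into counting cosets $h\StabS_{\sigma_0}$ in the set $\{h \in G : e^n_{\sigma_0} h^{-1} v \neq 0\}$; by Proposition~\ref{pro:cuspidal_criterion} this set is compact modulo~$Z(G)$, and since $Z(G) \subseteq \StabS_{\sigma_0}$ and $\StabS_{\sigma_0}$ is open, only finitely many cosets occur. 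Equivariance is the routine computation
\[
\alpha^n(gv)(\sigma) = (-1)^{\dim\sigma} e^n_\sigma g v = g\cdot (-1)^{\dim(g^{-1}\sigma)} e^n_{g^{-1}\sigma} v = (g\cdot\alpha^n(v))(\sigma),
\]
using $e^n_\sigma g = g e^n_{g^{-1}\sigma}$ and that~$G$ preserves dimensions.

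For the composite $\pi^n\circ\alpha^n$, fix $v\in V$ and choose a finite convex subcomplex $\Sigma \subseteq \Buil$ containing every~$\sigma$ with $e^n_\sigma v \neq 0$; such~$\Sigma$ exists because the convex hull of a finite subset of~$\Buil$ is again finite. Then
\[
(\pi^n\circ\alpha^n)(v) = \sum_{\sigma\in\Sigma} (-1)^{\dim\sigma} e^n_\sigma v = u^n_\Sigma v,
\]
and Theorem~\ref{the:support_projections} provides that $u^n_\Sigma$ is idempotent on~$V$ with image $\sum_{x\in\Sigma^0} e^n_x V$. Hence $u^n_\Sigma v$ lies in this image and is fixed by $u^n_{\Sigma'}$ for every larger finite convex~$\Sigma'$. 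Enlarging~$\Sigma$ to~$\Sigma'$ so as to absorb also the support of $\alpha^n(u^n_\Sigma v)$ gives $(\pi^n\alpha^n)^2(v) = u^n_{\Sigma'}(u^n_\Sigma v) = u^n_\Sigma v = (\pi^n\alpha^n)(v)$, so $\pi^n\alpha^n$ is idempotent. One inclusion of the image formula follows from the factorisation $e^n_\sigma = \prod_{x\prec\sigma} e^n_x$ used in the proof of Theorem~\ref{the:support_projections}; the reverse one holds because for $v = e^n_x v$ with $x\in\Buil^0$ one may take $\Sigma \ni x$, so $v\in\im(u^n_\Sigma)$ and $(\pi^n\alpha^n)(v) = u^n_\Sigma v = v$. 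The main obstacle is the finite-support step, where the coset-counting form of cuspidality has to be carefully converted into finiteness on simplices, using $Z(G) \subseteq \StabS_{\sigma_0}$ and openness of $\StabS_{\sigma_0}$; everything afterwards is bookkeeping around Theorem~\ref{the:support_projections}.
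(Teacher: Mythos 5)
Your proof is correct and follows essentially the same route as the paper: finite support of $\alpha^n(v)$ from Proposition~\ref{pro:cuspidal_criterion} together with the finiteness of $G\backslash\Buil$ and the conjugation identity $e^n_{g\sigma}=ge^n_\sigma g^{-1}$, and then idempotency and the image formula by applying Theorem~\ref{the:support_projections} to a finite convex subcomplex absorbing all the (finitely many) nonzero terms. You merely spell out the details that the paper compresses into ``write $\Buil$ as a union of finite convex subcomplexes and use that all sums that occur are finite.''
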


\begin{proof}
  Let \(v\in V\) and let \(\sigma\in\Buil\).
  Proposition~\ref{pro:cuspidal_criterion} shows that the set of
  \(g\in G\) with \(e^n_{g\sigma} v = g\av{U_\sigma^n} g^{-1} v
  \neq0\) is compact modulo \(Z(G)\).  Hence \(\alpha^n(v)(\tau)\)
  vanishes on all but finitely many \(\tau\in G\sigma\).  Since
  \(G\backslash \Buil\) is finite, \(\alpha^n(v)\) has finite
  support.  We have \(\alpha^n(v)(\sigma)\in e^n_\sigma V\) by
  construction, so \(\alpha^n(v)\in \Pco^n\).  The map
  \(\alpha^n\colon V\to \Pco^n\) is \(R\)\nb-linear and
  \(G\)\nb-equivariant because \(e^n_{g\sigma} = g e^n_\sigma
  g^{-1}\).  Theorem~\ref{the:support_projections} implies that
  \(\pi^n\circ \alpha^n\colon V\to V\) is idempotent with the
  asserted image: write~\(\Buil\) as a union of finite convex
  subcomplexes and use that all sums that occur are finite.
\end{proof}

If~\(V\) is finitely generated, then \(V=\sum_{x\in\Buil^0} e^n_x
V\) for some \(n\ge0\), so that \(\pi^n\circ\alpha^n=\Id_V\)
and~\(V\) is a direct summand of~\(\Pco^n\).  For general~\(V\), we
let \(\Pco\defeq \bigoplus_{n=0}^\infty \Pco^n\).  This is
cmc-projective by Lemma~\ref{lem:rel_projective}.  Let \(\pi\colon
\Pco\to V\) be induced by the maps \(\pi^n\colon \Pco^n\to V\).
Define \(\alpha\colon V\to \Pco\) by \(\alpha(v) =
(\alpha(v)_n)_{n\in\N}\) with
\[
\alpha(v)_n \defeq \alpha^n\circ (\Id_V-\pi^{n-1}\circ\alpha^{n-1})(v)
\]
for \(n\in\N\), with the convention \(\pi^{-1}\circ\alpha^{-1}=0\) for
\(n=0\).  For any \(v\in V\) and \(\sigma\in\Buil^0\), there is
\(n_0\ge0\) with \(e^n_\sigma v=v\) for all \(n\ge n_0\).  This
implies \((\Id_V-\pi^n\circ\alpha^n)(v)=0\) and hence
\(\alpha(v)_n=0\) for \(n\ge n_0\).  Thus \(\alpha(v)\in \Pco\).
Since the maps \(\alpha^n\) and~\(\pi^n\) are \(G\)\nb-equivariant
\(R\)\nb-module homomorphisms, so is~\(\alpha\).  The sequence of
idempotent maps \(\pi^n\circ\alpha^n\) on~\(V\) is increasing,
that is, \(\pi^n\circ\alpha^n\circ\pi^{n-1}\circ\alpha^{n-1} =
\pi^{n-1}\circ\alpha^{n-1}\).  Hence
\[
\pi\circ\alpha(v)
= \sum_{n=0}^\infty
(\pi^n\circ\alpha^n-\pi^{n-1}\circ\alpha^{n-1})(v)
= v.
\]
This finishes the proof that cuspidal representations are
cmc-projective.

\subsection{Enough c-projective cuspidal representations}
\label{sec:enough_c-projective}

We are going to prove Theorem~\ref{the:enough_c-projective} about
cuspidal representations being c-split quotients of c-projective
cuspidal representations.  An important tool that we will also use
for other purposes is the subgroup \(\Gz\subseteq G\) that is the
intersection of the kernels of all unramified characters.  The
following is proved in \cite{Renard:Representations}*{Section
  V.2.3--6}:

\begin{proposition}
  \label{pro:Gz}
  The subgroup~\(\Gz\) is open and normal in~\(G\) and contains any
  compact subgroup of~\(G\).  The quotient group~\(G/\Gz\) is free
  Abelian of finite rank.  The image of~\(Z(G)\) in~\(G/\Gz\) has
  finite index, and \(\Gz[Z] \defeq Z(G)\cap \Gz\) is compact.\qed
\end{proposition}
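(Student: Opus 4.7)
The plan is to describe $\Gz$ concretely via the Harish-Chandra homomorphism and read off each assertion. Let $F$ denote the non-Archimedean local field of definition of $G$, $\mathcal{O}_F$ its ring of integers, $v\colon F^\times \to \mathbb{Z}$ the normalised valuation, and $X^*(G)$ the (finitely generated free abelian) group of $F$-rational characters of~$G$. Define
\[
H\colon G \to \Hom(X^*(G), \mathbb{Z}), \qquad H(g)(\chi) \defeq v(\chi(g)).
\]
The first task is to identify $\Gz$ with $\ker H$. Compact subgroups of $G$ visibly lie in $\ker H$: if $K \subseteq G$ is compact, then $\chi(K) \subseteq F^\times$ is compact for every $\chi \in X^*(G)$, hence contained in $\mathcal{O}_F^\times$, so $H(K) = 0$. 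The converse — that $\ker H$ is itself generated by compact subgroups — is the main structural input (this is where Renard's arguments, based on the Iwasawa decomposition of~$G$, are really needed). Granting it, $G/\ker H$ injects into the free abelian group $\Hom(X^*(G), \mathbb{Z})$ and so has enough $\mathbb{C}^\times$-valued characters; these are exactly the unramified characters of $G$, and therefore $\Gz = \ker H$.

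The elementary assertions now follow directly from the definition of $H$. Since each $\chi \in X^*(G)$ is a continuous homomorphism $G \to F^\times$ and $v$ is locally constant, $H$ is continuous with discrete codomain, so $\Gz = \ker H$ is open, and it is normal because $H$ is a homomorphism. The containment of compact subgroups was noted above. And $G/\Gz$ embeds into $\Hom(X^*(G), \mathbb{Z}) \cong \mathbb{Z}^r$ with $r = \mathrm{rank}\, X^*(G)$, hence is free abelian of finite rank.

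To handle the centre, let $A_G \subseteq Z(G)$ denote the maximal $F$-split central torus. Reductive structure theory gives $\mathrm{rank}\, X^*(G) = \dim A_G$ and tells us that the restriction map $X^*(G) \to X^*(A_G)$ has finite cokernel. Via the identification $A_G(F) \cong (F^\times)^{\dim A_G}$, the map $H|_{A_G(F)}$ becomes, up to a matrix of nonzero determinant, the product of valuations, whose kernel is the maximal compact subgroup $(\mathcal{O}_F^\times)^{\dim A_G}$ of $A_G(F)$ and whose image has finite index in $\Hom(X^*(G), \mathbb{Z})$. Since $A_G \subseteq Z(G)$, the image of $Z(G)$ in $G/\Gz$ has finite index too.

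Finally, for compactness of $\Gz[Z] = Z(G) \cap \Gz$: the quotient torus $Z(G)/A_G$ is $F$-anisotropic, so $Z(G)/A_G(F)$ is compact. The natural map $Z(G) \cap \Gz \to Z(G)/A_G(F)$ lands in this compact group with kernel $A_G(F) \cap \Gz = (\mathcal{O}_F^\times)^{\dim A_G}$, which is also compact; hence $\Gz[Z]$ is compact as an extension of two compact groups. The real obstacle throughout is structural rather than formal — that $\ker H$ is generated by compact subgroups, that $X^*(G) \hookrightarrow X^*(A_G)$ has finite cokernel, and that anisotropic tori have compact $F$-points — after which every clause of the proposition becomes a straightforward reading of kernels and images through~$H$.
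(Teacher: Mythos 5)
The paper gives no proof of this proposition at all — it is stated with a \qed and a citation to \cite{Renard:Representations}*{Section V.2.3--6} — and your sketch reconstructs exactly the standard argument found there, via the homomorphism \(H(g)(\chi)=v(\chi(g))\). It is correct in outline, and you rightly isolate the genuinely nontrivial structural inputs (that \(\ker H\) coincides with the subgroup generated by compact subgroups, the finite-cokernel statement for \(X^*(G)\to X^*(A_G)\), and compactness of the \(F\)-points of anisotropic tori) as the places where the reference is really doing the work, so this matches the paper's approach.
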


The subgroup~\(\Gz\) is equal to the subgroup generated by the
compact elements of~\(G\) used in~\cite{Vigneras:l-modulaires}.  We
modify the criterion for cuspidal representations in
Proposition~\ref{pro:cuspidal_criterion}:

\begin{proposition}
  \label{pro:cuspidal_criterion_circ}
  A smooth representation \(V\in\Rep_R(G)\) is cuspidal if and only
  if for each \(v\in V\) and \(x\in\Hecke\), the set of \(g\in \Gz\)
  with \(x g v\neq0\) is compact.
\end{proposition}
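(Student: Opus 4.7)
The plan is to use Proposition~\ref{pro:Gz} to translate between Proposition~\ref{pro:cuspidal_criterion}, which measures sets modulo $Z(G)$ inside all of $G$, and the new criterion, which measures sets inside $\Gz$ with no quotient. Since $[G : \Gz \cdot Z(G)]$ is finite and $\Gz[Z] = Z(G) \cap \Gz$ is compact, the two notions of ``compact'' should become interchangeable after a small amount of bookkeeping.

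For the forward direction, assume $V$ cuspidal and fix $v \in V$ and $x \in \Hecke$. I would choose a compact open pro-\(p\) subgroup $U$ for which $x * \av{U} = x$ in $\Hecke$; then $xw = x\av{U}w$ for every $w$, so $\av{U} gv = 0$ forces $xgv = 0$. Hence
\[
\{g \in \Gz : xgv \neq 0\} \subseteq \Gz \cap S, \qquad S \defeq \{g \in G : \av{U} g v \neq 0\}.
\]
By Proposition~\ref{pro:cuspidal_criterion}, the image of $S$ in $G/Z(G)$ is compact. The continuous map $\Gz \hookrightarrow G \to G/Z(G)$ has compact kernel $\Gz[Z]$ and image of finite index by Proposition~\ref{pro:Gz}, so preimages of compact sets are compact; in particular $\Gz \cap S$ is compact. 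Since $v$ is smooth, $g \mapsto xgv$ is locally constant, so $\{g \in \Gz : xgv \neq 0\}$ is closed in $\Gz$, hence compact as a closed subset of a compact set.

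For the backward direction, assume the new criterion. By Proposition~\ref{pro:cuspidal_criterion} it is enough to show that $\{g \in G : \av{U} gv \neq 0\}$ is compact modulo $Z(G)$ for each compact open pro-\(p\) subgroup $U$ and each $v \in V$. I would choose representatives $f_1,\dots,f_m$ for the finite quotient $G/(\Gz \cdot Z(G))$ from Proposition~\ref{pro:Gz}. For $g = g' z f_i$ with $g' \in \Gz$ and $z \in Z(G)$, centrality and invertibility of $z$ give $\av{U} gv \neq 0 \iff \av{U} g' (f_i v) \neq 0$. Applying the hypothesis with $x = \av{U}$ and $v$ replaced by $f_i v$ yields compact sets $K_i \subseteq \Gz$, and then
\[
\{g \in G : \av{U} gv \neq 0\} = \bigcup_{i=1}^m Z(G) \cdot K_i \cdot f_i
\]
has image in $G/Z(G)$ equal to a finite union of compact sets, hence compact.

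The main obstacle is the bookkeeping in the forward direction: Proposition~\ref{pro:cuspidal_criterion} only yields compactness of $S/Z(G)$, not directly of $(\Gz \cap S)/\Gz[Z]$. Both parts of Proposition~\ref{pro:Gz}---the compact kernel $\Gz[Z]$ and the finite index of $\Gz \cdot Z(G)$---are needed to relate these two compactness statements.
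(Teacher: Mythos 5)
Your proposal is correct and follows essentially the same route as the paper: reduce from general $x\in\Hecke$ to $\av{U}$ via $x=x\av{U}$, use coset representatives of the finite quotient $G/\Gz Z(G)$ to pass from compactness in $\Gz$ to compactness modulo $Z(G)$ in $G$, and use the compactness of $\Gz[Z]=Z(G)\cap\Gz$ for the converse. The only difference is presentational: you spell out the properness of $\Gz\to G/Z(G)$ where the paper simply says the set must be compact ``because $\Gz[Z]$ is compact.''
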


\begin{proof}
  It makes no difference whether we use all \(x\in\Hecke\) or only
  \(\av{U}\) for compact open pro-\(p\) subgroups \(U\subseteq G\)
  because for any \(x\in\Hecke\) there is~\(U\) with \(x=x\av{U}\).
  Assume first that for each \(v\in V\), the set of \(g\in \Gz\) with
  \(x g v\neq0\) is compact.  Let \(h_1,\dotsc,h_n\in G\) be
  representatives for the finite quotient group \(G/\Gz Z(G)\).  So
  every element of~\(G\) is of the form \(g z h_i\) with \(g\in\Gz\),
  \(z\in Z(G)\), \(i\in\{1,\dotsc,n\}\).  For each~\(i\), the
  set~\(K_i\) of \(g\in\Gz\) with \(x g h_i v\neq0\) is compact.  The
  set of \(g\in G\) with \(x g v\neq0\) is \(\bigsqcup_{i=1}^n K_i h_i
  Z(G)\), which is compact modulo the centre.  Hence~\(V\) verifies
  the criterion for being cuspidal in
  Proposition~\ref{pro:cuspidal_criterion}.  Conversely, if~\(V\)
  verifies that criterion, then the set of \(g\in\Gz\) with \(x g
  v\neq0\) has to be compact because~\(\Gz[Z]\) is compact.
\end{proof}

We use the criterion in Proposition~\ref{pro:cuspidal_criterion_circ}
to define which smooth representations of~\(\Gz\) are cuspidal.  Then
Proposition~\ref{pro:cuspidal_criterion_circ} says that a
representation of~\(G\) is cuspidal if and only if its restriction
to~\(\Gz\) is cuspidal.

\begin{theorem}
  \label{the:c-projective_Gz}
  Cuspidal representations of~\(\Gz\) are c-projective
  in~\(\Rep_R(\Gz)\).
\end{theorem}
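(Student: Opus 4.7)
The argument transposes the proof of Theorem~\ref{the:cmc-projective} from $G$ to $\Gz$, replacing ``compact modulo the centre'' by ``compact'' throughout. The subgroups $U^n_\sigma$ are compact and hence contained in~$\Gz$ by Proposition~\ref{pro:Gz}, so the idempotents $e^n_\sigma = \av{U^n_\sigma}$ act on any smooth $\Gz$-representation. For a cuspidal $V \in \Rep_R(\Gz)$ in the sense of Proposition~\ref{pro:cuspidal_criterion_circ}, I would set $\Pco^n \defeq \bigoplus_{\sigma \in \Buil} e^n_\sigma V$ with the $\Gz$-action $(g\psi)(\sigma) \defeq g\cdot \psi(g^{-1}\sigma)$ and $\Pco \defeq \bigoplus_{n \in \N} \Pco^n$.

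The decisive improvement over Lemma~\ref{lem:rel_projective} is that the $\Gz$-stabilizer $\Gz_\sigma \defeq \StabS_\sigma \cap \Gz$ is now compact: it is an extension of a closed subgroup of the compact group $\StabS_\sigma / Z(G)$ by the compact group $Z(G) \cap \Gz$, both compact by Proposition~\ref{pro:Gz}. Decomposing $\Pco^n$ by $\Gz$-orbits identifies each summand with the compactly induced representation of $\Gz$ from $\Gz_\sigma$ on $e^n_\sigma V$, and Frobenius reciprocity yields
\[
\Hom_{\Gz}\bigl(\Pco^n_{[\sigma]},\, W\bigr) \cong \Hom_{\Gz_\sigma}(e^n_\sigma V,\, W).
\]
The right-hand side is exact on c-split extensions because $\Gz_\sigma$ is compact. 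Arbitrary direct sums of c-projectives are c-projective (since $\Hom$ from a direct sum is a product of functors that are individually exact on c-split extensions), so $\Pco$ is c-projective in $\Rep_R(\Gz)$.

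For $\alpha^n \colon V \to \Pco^n$ defined by $\alpha^n(v)(\sigma) \defeq (-1)^{\dim \sigma} e^n_\sigma v$, I must verify that $\alpha^n(v)$ has finite support in $\Buil$. Within a single $\Gz$-orbit $\Gz\sigma \cong \Gz/\Gz_\sigma$, Proposition~\ref{pro:cuspidal_criterion_circ} shows that the set of $g \in \Gz$ with $e^n_{g\sigma} v = g\av{U^n_\sigma}g^{-1}v \neq 0$ is compact, and a compact subset of $\Gz$ meets only finitely many cosets of the open subgroup $\Gz_\sigma$. For the $\Gz$-orbit space $\Gz\backslash\Buil$ to be finite, I would argue that $Z(G)$ acts trivially on the reduced building $\Buil$ (this is compatible with the statement in Section~\ref{sec:support} that every subgroup compact mod centre lies in some~$\StabS_\sigma$), so that $Z(G) \subseteq \StabS_\sigma$ for each~$\sigma$ and hence $G/\Gz\StabS_\sigma$ is a quotient of the finite group $G/Z(G)\Gz$ given by Proposition~\ref{pro:Gz}; since $G\backslash\Buil$ is finite, so is $\Gz\backslash\Buil$.

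The proof then concludes exactly as the final paragraph of Section~\ref{sec:cmc-projective}: Theorem~\ref{the:support_projections} makes each $\pi^n\circ\alpha^n$ idempotent on $V$ with image $\sum_{x\in\Buil^0} e^n_x V$, and the telescoping definition
\[
\alpha(v)_n \defeq \alpha^n \circ \bigl(\Id_V - \pi^{n-1}\circ\alpha^{n-1}\bigr)(v)
\]
together with the approximate unit property of $(e^n_x)_n$ produces a $\Gz$-equivariant map $\alpha \colon V \to \Pco$ with $\pi\circ\alpha = \Id_V$. This realises $V$ as a $\Gz$-equivariant direct summand of the c-projective representation $\Pco$, hence as c-projective. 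The only genuinely new verification beyond the $G$-case is the finiteness of $\Gz\backslash\Buil$, which is needed so that $\alpha(v)$ actually lies in the direct sum $\Pco$ rather than only in the direct product; this is the main technical point to get right.
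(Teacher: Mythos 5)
Your proposal is correct and follows essentially the same route as the paper, which simply observes that since \(\Gz[Z]=Z(G)\cap\Gz\) is compact, cmc- and c-projectivity coincide in \(\Rep_R(\Gz)\), and then reruns the proof of Theorem~\ref{the:cmc-projective} for \(\Gz\). Your extra verifications --- compactness of the stabilisers \(\StabS_\sigma\cap\Gz\), finiteness of \(\Gz\backslash\Buil\) via the finiteness of \(G/\Gz Z(G)\), and the finite-support check for \(\alpha^n(v)\) using Proposition~\ref{pro:cuspidal_criterion_circ} --- are exactly the points implicit in the paper's ``follows as in the proof of Theorem~\ref{the:cmc-projective}'', and they are all handled correctly.
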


\begin{proof}
  Since \(\Gz[Z]\) is compact, cmc- and c-projectives
  in~\(\Rep_R(\Gz)\) are the same.  Hence the assertion follows as in
  the proof of Theorem~\ref{the:cmc-projective}.
\end{proof}

The functor that restricts a smooth \(G\)\nb-representation
to~\(\Gz\) and then applies compact induction to~\(G\) maps \(W''\in
\Rep_R(G)\) to \(W = \bigoplus_{[g]\in G/\Gz} W''\), the space of
finitely supported functions \(G/\Gz\to W''\), with the (smooth)
representation of~\(G\) by \((g\psi)[h] = g \cdot (\psi[g^{-1} h])\)
for \(g,h\in G\) and \(\psi\colon G/\Gz \to W''\); here we use
that~\(G\) already acts on~\(W''\) to simplify the result.  Since
this action of~\(G\) on~\(W\) restricts to the given representation
of~\(\Gz\) on~\(W''\) on each summand,
Proposition~\ref{pro:cuspidal_criterion_circ} shows that~\(W\) is
cuspidal if and only if~\(W''\) is.

Define \(\pi\colon W\to W''\), \(\pi(\psi) = \sum_{[g]\in G/\Gz}
\psi[g]\).  This map is \(G\)\nb-equivariant and \(R\)\nb-linear.  It
splits by the \(\Gz\)\nb-equivariant \(R\)\nb-linear map \(s\colon
W''\to W\) defined by \(s(v)[1] \defeq v\) and \(s(v)[g] \defeq 0\)
for \([g]\neq[1]\) in~\(G/\Gz\).  Since all compact subgroups are
contained in~\(\Gz\), \(\pi\) is a c-split surjection.  The
adjointness between compact induction and restriction for the open
subgroup \(\Gz\subseteq G\) gives
\[
\Hom_G(W,\blank) \cong \Hom_{\Gz}(W'',\blank).
\]
This functor is exact on c-split extensions by
Theorem~\ref{the:c-projective_Gz}
because~\(W''\) is cuspidal.  Thus~\(W''\) is c-projective and
cuspidal.  Take \(W'\defeq \ker(\pi)\) to finish the proof of
Theorem~\ref{the:enough_c-projective}.

\subsection{Representations over fields of good characteristic}
\label{sec:good_characteristic_projective}

Now let~\(R\) be a \emph{good field}, that is, its characteristic does
not divide the pro-order of~\(G\).

\begin{corollary}
  \label{cor:projective_field}
  Let~\(R\) be a good field.  Cuspidal representations of~\(G\) are
  projective as representations of~\(\Gz\), and quotients of
  projective, cuspidal representations of~\(G\).
\end{corollary}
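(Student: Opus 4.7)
The plan is to show that over a good field, c-split extensions are automatic in \(\Rep_R(G)\) and \(\Rep_R(\Gz)\), so that c-projectivity coincides with projectivity. The two assertions of the corollary then follow immediately from Theorem~\ref{the:c-projective_Gz} and Theorem~\ref{the:enough_c-projective}, combined with Proposition~\ref{pro:cuspidal_criterion_circ}.

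The key lemma is that smooth \(R\)\nb-representations of any compact open subgroup \(K\subseteq G\) form a semisimple category when~\(R\) is a good field. The argument is as follows: every finite subset of a smooth \(K\)\nb-representation \(V\) has an open stabiliser of finite index in~\(K\), so it generates a finite-dimensional subrepresentation on which some open normal subgroup \(K_0\triangleleft K\) of finite index acts trivially. This subrepresentation is a module over the finite group algebra \(R[K/K_0]\). Since \([K:K_0]\) is invertible in~\(R\) by the good-field hypothesis, Maschke's theorem makes this module semisimple; writing~\(V\) as the sum of such subrepresentations shows that~\(V\) itself is semisimple. In particular, every extension in \(\Rep_R(K)\) splits.

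Given an extension in \(\Rep_R(G)\) and a compact subgroup \(U\subseteq G\), I would use the standard fact that any compact subgroup of a reductive \(p\)\nb-adic group is contained in some compact open subgroup~\(K\); by Proposition~\ref{pro:Gz} it may be taken inside~\(\Gz\). The extension splits \(K\)\nb-equivariantly by the lemma, and this restricts to a \(U\)\nb-equivariant splitting; hence the extension is c-split. The same reasoning applies inside~\(\Gz\). Therefore c-projective and projective agree in both \(\Rep_R(G)\) and \(\Rep_R(\Gz)\).

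For part~(1), let \(V\in\Rep_R(G)\) be cuspidal. By Proposition~\ref{pro:cuspidal_criterion_circ} its restriction to~\(\Gz\) is cuspidal, hence c-projective in \(\Rep_R(\Gz)\) by Theorem~\ref{the:c-projective_Gz}, hence projective by the preceding paragraph. For part~(2), Theorem~\ref{the:enough_c-projective} produces a c-split extension \(W'\into W\prto V\) with~\(W\) cuspidal and c-projective in \(\Rep_R(G)\); then~\(W\) is projective, and~\(V\) is a quotient of this projective cuspidal representation. The main obstacle is the semisimplicity lemma, in particular the step that embeds an arbitrary compact subgroup into a compact open subgroup so that the good-field hypothesis can be invoked; once this is in place the Maschke averaging is routine.
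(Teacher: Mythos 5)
Your proof is correct and follows essentially the same route as the paper: both establish that over a good field every extension in \(\Rep_R(G)\) (and in \(\Rep_R(\Gz)\)) is c-split, so that c-projective objects are projective, and then invoke Theorems \ref{the:c-projective_Gz} and~\ref{the:enough_c-projective}. The only cosmetic difference is that the paper averages an arbitrary linear section directly over the normalised Haar measure of the given compact subgroup \(U\), whereas you pass through semisimplicity of \(\Rep_R(K)\) for a compact open \(K\supseteq U\) and restrict; both are the same Maschke-type argument.
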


\begin{proof}
  Any extension of vector spaces over the field~\(R\) splits, and we
  can make the section \(U\)\nb-invariant for a given compact subgroup
  \(U\subseteq G\) by averaging over the normalised Haar measure
  of~\(U\); this measure has values in~\(R\) because \([U:U']\) is
  invertible in~\(R\) for any open subgroup \(U'\subseteq U\) by our
  assumption on the characteristic of~\(R\).  Thus all extensions are
  c-split exact, and c-projective objects are projective.  Now
  everything follows from Theorems \ref{the:c-projective_Gz}
  and~\ref{the:enough_c-projective}.
\end{proof}

\begin{definition}
  \label{def:induced}
  A representation is \emph{subinduced} if it is contained in a direct
  sum of parabolically induced representations from proper parabolic
  subgroups.
\end{definition}

\begin{theorem}
  \label{the:cuspidal_induced_orthogonal}
  Let~\(R\) be a good field.  Let \(V\in\Rep_R(G)\) be cuspidal and
  let \(W\in\Rep_R(G)\) be subinduced.  Any map \(W\to V\) or \(V\to
  W\) vanishes.  Subinduced representations have no non-zero cuspidal
  subquotients.
\end{theorem}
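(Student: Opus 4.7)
The plan is to combine Frobenius reciprocity with the existence of projective cuspidal covers furnished by Corollary~\ref{cor:projective_field}.

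First I would show that \(\Hom_G(V, W) = 0\) for any cuspidal \(V\) and any subinduced \(W\). For \(W = \mathrm{Ind}_P^G U\) parabolically induced from a proper parabolic \(P = MN\) with Levi quotient~\(M\), Frobenius reciprocity (the adjunction between parabolic restriction \(r_P\) and parabolic induction) gives
\[
\Hom_G(V, \mathrm{Ind}_P^G U) \cong \Hom_M(r_P V, U),
\]
which vanishes because \(V\) is cuspidal. For a general subinduced \(W\) embedded into some \(\bigoplus_i \mathrm{Ind}_{P_i}^G U_i\), the component in each summand of a \(G\)-map \(V \to W\) vanishes by the above, so the map into the direct sum is zero, and then \(V \to W\) itself is zero because the embedding is injective.

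Next I would rule out non-zero cuspidal subquotients of \(W\). Suppose \(V'' = V'/V'''\) is such a subquotient with \(V''' \subseteq V' \subseteq W\). Choose a surjection \(\pi \colon Q \twoheadrightarrow V''\) with \(Q\) a projective cuspidal object of \(\Rep_R(G)\), as provided by Corollary~\ref{cor:projective_field}. Projectivity of \(Q\) lifts \(\pi\) across the surjection \(V' \twoheadrightarrow V''\) to a \(G\)-equivariant map \(\tilde\pi \colon Q \to V'\), and composing with the inclusion \(V' \hookrightarrow W\) produces a map \(Q \to W\) that must vanish by the first step, since \(Q\) is cuspidal. Because \(V' \hookrightarrow W\) is injective, this forces \(\tilde\pi = 0\) and hence \(\pi = 0\), contradicting \(V'' \neq 0\).

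For the remaining direction \(W \to V\): the cuspidality criterion of Proposition~\ref{pro:cuspidal_criterion} is inherited by subrepresentations, because the condition on \(\{g\in G : \av{U} g v \neq 0\}\) only involves the element \(v\) itself, so any subrepresentation of the cuspidal \(V\) is cuspidal. The image of any \(G\)-map \(f \colon W \to V\) is simultaneously a subrepresentation of \(V\) (hence cuspidal) and a quotient of \(W\) (hence a cuspidal subquotient of \(W\)), so by the previous step it vanishes, yielding \(f = 0\). The main obstacle is arranging the projective cuspidal \(Q\) in \(\Rep_R(G)\) itself and not merely in \(\Rep_R(\Gz)\); this is precisely where the good field hypothesis enters via Corollary~\ref{cor:projective_field}, while the remainder of the argument is a routine chase of adjunction and lifting.
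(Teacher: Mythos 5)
Your proposal is correct and follows essentially the same route as the paper: the adjunction between parabolic restriction and induction kills maps $V\to W$, and the projective cuspidal covers from Corollary~\ref{cor:projective_field} together with that vanishing handle the rest. The only difference is cosmetic ordering --- you deduce the vanishing of $W\to V$ from the absence of cuspidal subquotients, while the paper first kills $W\to V$ by lifting a projective cover of $\im(f)$ and then derives the subquotient statement from the fact that subrepresentations of subinduced representations are subinduced.
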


\begin{proof}
  The vanishing of maps \(V\to W\) is well-known and follows from the
  (easy) left adjointness of parabolic restriction to parabolic
  induction (see \cite{Vigneras:l-modulaires}*{II.2.3} or
  \cite{Renard:Representations}*{Section VI.7.2}).  Assume there were
  a non-zero map \(f\colon W\to V\).  Since subrepresentations of
  cuspidal representations remain cuspidal, \(\im(f)\) is cuspidal.
  By Corollary~\ref{cor:projective_field}, \(\im(f)\) is a quotient of
  a projective, cuspidal representation, \(p\colon V'\prto \im(f)\).
  Since~\(V'\) is projective, there is a map \(h\colon V'\to W\) with
  \(f\circ h=p\).  Since \(\im(f)\neq0\), we have \(p\neq0\) and thus
  \(h\neq0\).  But since~\(V'\) is cuspidal and~\(W\) is subinduced,
  there is no non-zero map \(V'\to W\).  Thus there cannot be a
  non-zero map \(W\to V\).

  If subinduced representations may have non-zero cuspidal
  subquotients,
  they also may have non-zero cuspidal quotients.  This would give a
  non-zero map from a subinduced representation to a cuspidal
  representation, which is impossible.
\end{proof}

An example mentioned in \cite{Vigneras:l-modulaires}*{II.2.5}
shows that parabolically induced representations of~\(\Gl(2,\Q_5)\)
over the field with \(3\)~elements may have cuspidal subquotients.
Hence the assertions of Theorem~\ref{the:cuspidal_induced_orthogonal}
and Corollary~\ref{cor:projective_field} become false in this case.
Since~\(3\) divides the pro-order of~\(\Gl(2,\Q_5)\), this is no
contradiction.

\begin{remark}
  Vign\'eras~\cite{Vigneras:Cohomology} proves that
  \emph{irreducible} cuspidal representations are projective in the
  category of representations with a fixed central character; this
  is essentially the same as proving that they are projective as
  \(\Gz\)\nb-representations.  The proof depends on Schur's Lemma,
  which implies that any non-zero map on an irreducible
  representation is invertible.  There are only finitely many
  isomorphism classes of cuspidal representations of~\(\Gz\) that
  contain a \(K\)\nb-fixed vector for any fixed compact subgroup
  \(K\subseteq \Gz\).  This deep fact and the projectivity of
  irreducible cuspidal representations imply a product decomposition
  of the category of representations of~\(\Gz\) into one factor for
  each irreducible cuspidal representation and one factor for those
  representations that have no cuspidal subquotients (compare
  \cite{Renard:Representations}*{Section VI.3.4}).  This product
  decomposition shows that \emph{all} cuspidal representations are
  projective as representations of~\(\Gz\) because this holds for
  irreducible cuspidal representations.  In contrast, we directly
  prove that arbitrary cuspidal representations are projective and
  also injective as representations of~\(\Gz\).  Much of the direct
  product decomposition in \cite{Renard:Representations}*{Section
    VI.3.4} follows from such projectivity and injectivity results,
  compare Theorem~\ref{the:split_rep} below.
\end{remark}

\section{Relative injectivity of cuspidal representations}
\label{sec:rel_injective}

\begin{definition}
  \label{def:uniformly_cuspidal}
  A representation~\(V\) is \emph{uniformly cuspidal} if for each
  \(x,y\in\Hecke\) the set of \(g\in G\) for which \(x g y\in\Hecke\)
  acts non-trivially on~\(V\) is compact modulo~\(Z(G)\).
\end{definition}

It is shown in~\cite{Vigneras:l-modulaires}*{II.2.16} that cuspidal
representations are uniformly cuspidal.  This allows us to prove
relative injectivity results for cuspidal representations.

\subsection{Cmc-injectivity}
\label{sec:cmc-injective}

\begin{theorem}
  \label{the:cmc-injective}
  Cuspidal representations in \(\Rep_R(G)\) are cmc-injective.
\end{theorem}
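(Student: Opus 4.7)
The plan is to dualize the proof of Theorem~\ref{the:cmc-projective}, replacing compact induction by smooth induction and invoking the uniform cuspidality of~$V$ (\cite{Vigneras:l-modulaires}*{II.2.16}) in place of cuspidality alone. Where that proof exhibited~$V$ as a retract of a cmc-projective object $\Pco=\bigoplus_n\Pco^n$ built from compact inductions, I want to exhibit~$V$ as a retract of a cmc-injective object $Q=\bigoplus_nQ^n$ built from smooth inductions.

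Concretely, for each $n\in\N$ I would set
\[
Q^n \defeq \bigoplus_{[\sigma]\in G\backslash\Buil} \mathrm{Ind}_{\StabS_\sigma}^G(e^n_\sigma V),
\]
a finite direct sum of smooth inductions from the open subgroups~$\StabS_\sigma$. Since~$\StabS_\sigma$ is compact modulo~$Z(G)$, any cmc-split extension in $\Rep_R(G)$ splits as an $\StabS_\sigma$-representation, so $\Hom_{\StabS_\sigma}(\blank,e^n_\sigma V)$ is exact on cmc-split extensions; by Frobenius reciprocity for smooth induction from an open subgroup, the same holds for $\Hom_G(\blank,\mathrm{Ind}_{\StabS_\sigma}^G(e^n_\sigma V))$, so~$Q^n$ is cmc-injective. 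The map $\iota^n\colon V\to Q^n$ dual to~$\pi^n$ corresponds under Frobenius reciprocity to the $\StabS_\sigma$-equivariant maps $v\mapsto e^n_\sigma v$ (equivariant because $\StabS_\sigma$ normalizes $U^n_\sigma$). The map $\beta^n\colon Q^n\to V$ dual to~$\alpha^n$ should be the $G$-equivariant map formally given by $\beta^n(\psi)=\sum_\sigma(-1)^{\dim\sigma}\psi(\sigma)$, viewing~$\psi$ as a smooth function $\Buil\to V$ with $\psi(\sigma)\in e^n_\sigma V$. Granted~$\beta^n$, Theorem~\ref{the:support_projections} identifies $\beta^n\circ\iota^n$ with the idempotent projection onto $\sum_{x\in\Buil^0}e^n_x V$, and the telescoping argument from the cmc-projective proof (with the two maps playing opposite roles) assembles these into $\iota\colon V\to Q\defeq\bigoplus_nQ^n$ and $\beta\colon Q\to V$ with $\beta\circ\iota=\Id_V$. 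This presents~$V$ as a retract of the cmc-injective~$Q$, whence~$V$ is itself cmc-injective.

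The hard part is the construction of~$\beta^n$. Unlike elements of~$\Pco^n$, general elements of~$Q^n$ need not have finite support on~$\Buil$, so the naive sum $\sum_\sigma(-1)^{\dim\sigma}\psi(\sigma)$ diverges. This is precisely where uniform cuspidality must be used: Definition~\ref{def:uniformly_cuspidal} says that for all $x,y\in\Hecke$ the set of $g\in G$ for which $xgy$ acts nontrivially on~$V$ is compact modulo~$Z(G)$. Applied with $x,y$ ranging over the idempotents $e^n_\sigma$, this should force all but finitely many of the contributions $(-1)^{\dim\sigma}\psi(\sigma)$ to vanish in~$V$ for any fixed~$\psi$, making $\beta^n(\psi)$ a well-defined element of~$V$; $G$-equivariance then follows formally from that of the family $(e^n_\sigma)_{\sigma\in\Buil}$.
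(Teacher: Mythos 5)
Your architecture is exactly the paper's: replace compact induction by smooth induction, use Frobenius reciprocity for the open subgroups \(\StabS_\sigma\) to get cmc-injectivity of the induced objects (the paper's \(I^n\cong\prod_{\sigma\in G\backslash\Buil}I^n_\sigma\) is your \(Q^n\)), keep \(\alpha^n\) as one half of the retraction, and use uniform cuspidality to make sense of the other half. But there are two genuine gaps, and they sit at precisely the two delicate points.

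First, the construction of \(\beta^n\). You assert that uniform cuspidality forces all but finitely many of the values \(\psi(\sigma)\) to vanish. That is false: a smooth vector \(\psi\) in \(\prod_{\sigma}e^n_\sigma V\) is only constrained by \(\psi(u\sigma)=u\psi(\sigma)\) for \(u\) in some compact open subgroup \(U\) fixing \(\psi\), and smooth induction (as opposed to compact induction) typically contains vectors with infinite support; nothing kills the individual terms \(\psi(\sigma)\). What uniform cuspidality gives is that the set of \(g\) with \(\av{U}ge^n_\sigma\neq0\) on \(V\) is compact modulo \(Z(G)\), hence that \(\av{U}\psi(\sigma)\) vanishes for all but finitely many \(\sigma\). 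So the correct definition is \(\sum_{\sigma}(-1)^{\dim\sigma}\av{U}\psi(\sigma)\), and you then owe a proof that this is independent of the choice of \(U\) fixing \(\psi\) (the paper reduces to a normal open subgroup \(U\subseteq U'\) and reindexes the double sum over \(U'/U\) and \(\Buil\)) and that it restricts to \(\pi^n\) on finitely supported \(\psi\). This independence argument is the technical heart of the theorem and is missing.

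Second, the assembly over \(n\). A countably infinite direct sum of injective (or cmc-injective) objects is not in general injective, so your \(Q=\bigoplus_nQ^n\) being cmc-injective is unjustified; passing to the product instead would make \(Q\) cmc-injective but then your retraction \(\beta\) becomes an infinite sum that need not converge. The paper sidesteps this by decomposing \(V\) itself: the summands \(V^m\defeq(\pi^m\alpha^m-\pi^{m-1}\alpha^{m-1})V\) satisfy \(V=\bigoplus_mV^m\), and because every smooth vector lies in the image of some \(e^m_x\), this direct sum is also the \emph{product} of the \(V^m\) in \(\Rep_R(G)\). Each \(V^m\) is a retract of the cmc-injective \(I^m\), and products of cmc-injectives are cmc-injective. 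You need some version of this step to finish.
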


For \(n\ge0\), let
\[
\tilde{I}^n \defeq \prod_{\sigma\in\Buil} e^n_\sigma V.
\]
Elements of~\(\tilde{I}^n\) are functions \(\psi\colon \Buil\to V\)
with \(e^n_\sigma \psi(\sigma) = \psi(\sigma)\) for all
\(\sigma\in\Buil\), and~\(G\) acts on such functions as above, by
\((g\psi)(\sigma) \defeq g\cdot (\psi(g^{-1}\sigma))\) for \(g\in G\),
\(\sigma\in\Buil\), \(\psi\in \tilde{I}^n\).  This representation is
not smooth.  Let \(I^n\subseteq \tilde{I}^n\) be the subspace of
smooth elements, that is, elements fixed by some compact open subgroup
of~\(G\).

\begin{lemma}
  \label{lem:rel_injective_I}
  The smooth representation~\(I^n\) is cmc-injective.
\end{lemma}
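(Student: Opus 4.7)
The plan mirrors Lemma~\ref{lem:rel_projective}, but replaces compact induction with (smooth) induction, which is right adjoint to restriction for open subgroups. First I would decompose $\tilde{I}^n$ along the finite orbit set $G\backslash \Buil$: letting $\tilde{I}^n_\sigma$ be the subspace of functions supported on $G\sigma$, one has $\tilde{I}^n = \prod_{[\sigma]\in G\backslash\Buil} \tilde{I}^n_\sigma$, and passing to smooth vectors commutes with this finite product, so $I^n = \prod_{[\sigma]} I^n_\sigma$ with $I^n_\sigma$ the smooth part of $\tilde{I}^n_\sigma$. Via the twist $\psi \mapsto \hat\psi$ with $\hat\psi(g) \defeq g^{-1}\psi(g\sigma)$, I would identify $\tilde{I}^n_\sigma$ with the space of functions $f\colon G\to e^n_\sigma V$ satisfying $f(gh) = h^{-1}f(g)$ for $h\in \StabS_\sigma$, with $G$ acting by left translation. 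Restricting to smooth vectors then identifies $I^n_\sigma$ with the smooth induced representation $\operatorname{Ind}_{\StabS_\sigma}^G(e^n_\sigma V)$.

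Frobenius reciprocity for the open subgroup $\StabS_\sigma\subseteq G$ gives
\[
\Hom_G(\blank, I^n_\sigma) \cong \Hom_{\StabS_\sigma}(\blank, e^n_\sigma V).
\]
Because $Z(G)\subseteq \StabS_\sigma$ and $\StabS_\sigma/Z(G)$ is compact, any cmc-split extension in $\Rep_R(G)$ admits a $\StabS_\sigma$\nb-equivariant splitting, so the right-hand functor is exact on cmc-split extensions. Hence each $I^n_\sigma$ is cmc-injective. Cmc-injectivity is preserved by (finite) products via the canonical isomorphism $\Hom_G(\blank, \prod_{[\sigma]} I^n_\sigma) \cong \prod_{[\sigma]} \Hom_G(\blank, I^n_\sigma)$, so $I^n$ is cmc-injective.

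The main point to verify with care is the identification of $I^n_\sigma$ with $\operatorname{Ind}_{\StabS_\sigma}^G(e^n_\sigma V)$: both the $G$\nb-action and the smoothness condition must be translated correctly under the twist $\psi \mapsto \hat\psi$, and one must check that taking smooth vectors really commutes with the orbit-decomposition (which is fine since the orbit set is finite). Once these bookkeeping steps are handled, the proof is the formal dual of Lemma~\ref{lem:rel_projective}.
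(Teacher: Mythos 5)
Your proof is correct and follows essentially the same route as the paper: decompose $I^n$ as a finite product over $G\backslash\Buil$, identify each factor with the smooth induction $\operatorname{Ind}_{\StabS_\sigma}^G(e^n_\sigma V)$, and use the right adjointness of induction to restriction together with the compactness of $\StabS_\sigma/Z(G)$. The paper's own proof is just a terser version of this, deferring the bookkeeping to the dual statement in Lemma~\ref{lem:rel_projective}.
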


\begin{proof}
  This is proved in the same way as Lemma~\ref{lem:rel_projective}.
  Let~\(I^n_\sigma\) be the restriction of~\(I^n\) to the orbit
  of~\(\sigma\).  Then \(I^n \cong \prod_{\sigma\in
    G\backslash\Buil} I^n_\sigma\) because \(G\backslash\Buil\) is
  finite.  Thus~\(I^n\) is cmc-injective if and only
  if~\(I^n_\sigma\) is cmc-injective for all \(\sigma\in
  G\backslash\Buil\).  The induction functor from~\(G_\sigma\)
  to~\(G\) maps~\(V\) to~\(I^n_\sigma\) and is right adjoint to the
  restriction functor.  Thus the same argument as in the proof of
  Lemma~\ref{lem:rel_projective} shows that~\(I^n_\sigma\) is
  cmc-injective.
\end{proof}

Since \(\Pco^n\subseteq \tilde{I}^n\) is a smooth representation,
we have \(\Pco^n\subseteq I^n\).  Thus the map~\(\alpha^n\)
constructed in Section~\ref{sec:cmc-projective} is also a
\(G\)\nb-equivariant \(R\)\nb-module map \(\alpha^n\colon V\to
I^n\).

\begin{lemma}
  \label{lem:extend_pi}
  If~\(V\) is cuspidal, then the map \(\pi^n\colon \Pco^n\to V\)
  extends to a \(G\)\nb-equivariant \(R\)\nb-module map
  \(\bar\pi^n\colon I^n\to V\).
\end{lemma}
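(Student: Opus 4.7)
My approach is to extend $\pi^n$ via an explicit formula and use the uniform cuspidality of $V$ (which holds since $V$ is cuspidal, by \cite{Vigneras:l-modulaires}*{II.2.16}) to make sense of what would otherwise be an infinite sum. The formula is the natural one: $\bar\pi^n(\psi)\defeq\sum_{\sigma\in\Buil}\psi(\sigma)$, to be interpreted through the Hecke action on $V$.

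First I would decompose $I^n=\bigoplus_{\sigma\in G\backslash\Buil}I^n_\sigma$ along the finitely many $G$-orbits, as in the proof of Lemma~\ref{lem:rel_injective_I}, so that each summand is a smooth induction $\text{Ind}_{\StabS_\sigma}^G(e^n_\sigma V)$; it then suffices to extend $\pi^n$ on each summand. Second, for $\psi\in I^n_\sigma$ fixed by a compact open pro-$p$ subgroup $U\subseteq G$, I would define $\bar\pi^n(\psi)\in V$ by prescribing its Hecke-images: for every compact open pro-$p$ subgroup $U'\subseteq G$, declare
\[
\av{U'}\bar\pi^n(\psi)\defeq\sum_{\tau\in G\sigma}\av{U'}\psi(\tau),
\]
where the right-hand side is to be shown finite. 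Third, for the finiteness, I would apply uniform cuspidality (Definition~\ref{def:uniformly_cuspidal}) with $x=\av{U'}$ and $y=e^n_\sigma$: this yields that $\{g\in G:\av{U'}ge^n_\sigma\text{ acts non-trivially on }V\}$ is compact modulo $Z(G)$, and combined with smoothness of $\psi$ and the observation that $e^n_{z\tau}=e^n_\tau$ for $z\in Z(G)$, this forces the support of the sum to lie in finitely many $\tau$. Fourth, I would check consistency of the family $\{\av{U'}\bar\pi^n(\psi)\}_{U'}$ to produce a well-defined $\bar\pi^n(\psi)\in V$, using that $V=\bigcup_{U'}\av{U'}V$ is non-degenerate as a Hecke module. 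Finally, $G$-equivariance is a direct computation from $(g\psi)(\tau)=g\psi(g^{-1}\tau)$, and the agreement $\bar\pi^n|_{\Pco^n}=\pi^n$ is immediate because finitely supported $\psi$ give genuinely finite sums.

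The main obstacle is the finiteness step: uniform cuspidality yields control ``compact modulo $Z(G)$'', but $Z(G)$-orbits on $G\sigma$ can be infinite, so I must rule out infinitely many central translates contributing to the Hecke sum. I expect to handle this using Proposition~\ref{pro:Gz}: because the image of $Z(G)$ in $G/\Gz$ has finite index and $\Gz[Z]=Z(G)\cap\Gz$ is compact, the ``compact modulo $Z(G)$'' bound reduces to genuine compactness after quotienting by a finite group, and the $U$-invariance of $\psi$ then ensures that this compact set meets only finitely many $\tau$'s.
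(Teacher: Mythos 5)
Your overall strategy --- the candidate sum \(\sum_\sigma\av{U'}\psi(\sigma)\), made finite by uniform cuspidality --- is the same as the paper's, and your finiteness step is essentially right (in fact simpler than you fear: \(Z(G)\) is contained in every stabiliser \(\StabS_\sigma\), so it acts trivially on \(\Buil\), and ``compact modulo \(Z(G)\)'' already cuts the orbit \(G\sigma\cong G/\StabS_\sigma\) down to finitely many cosets of the open subgroup \(\StabS_\sigma\); no appeal to Proposition~\ref{pro:Gz} is needed). The genuine gap is in your fourth step. Prescribing the values \(w_{U'}\defeq\sum_\tau\av{U'}\psi(\tau)\) for all compact open pro-\(p\) subgroups \(U'\) does not by itself produce an element of \(V\). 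The family is indeed consistent in the obvious sense --- \(\av{U'}w_{U''}=w_{U'}\) for \(U''\subseteq U'\), because \(\av{U'}\av{U''}=\av{U'}\) --- but a consistent family of this kind only defines an element of a completion of \(V\), not of \(V\) itself; non-degeneracy of \(V\) as an \(\Hecke\)-module is beside the point. (Compare \(V=\Hecke\) with left translation: the family \(w_{U'}=\av{U'}\) is consistent in this sense, yet it is not of the form \((\av{U'}v)_{U'}\) for any single \(v\in\Hecke\).)

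What is actually needed, and what your proposal omits, is that the values \emph{stabilise}: \(w_{U}=w_{U'}\) whenever both \(U\) and~\(U'\) fix~\(\psi\), so that this common value can be taken as \(\bar\pi^n(\psi)\). This is the core computation of the paper's proof: reduce to the case where \(U\) is a normal open subgroup of~\(U'\), write \(\av{U'}=[U':U]^{-1}\sum_{g\in U'/U}\av{U}g\), reindex the sum over~\(\sigma\) by \(\sigma\mapsto g^{-1}\sigma\), and use \(g\psi=\psi\) for \(g\in U'\). The \(U'\)-invariance of~\(\psi\) enters this argument in an essential way, and it is exactly the ingredient your ``consistency check'' never invokes. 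Once this stabilisation is supplied, the rest of your outline (equivariance, agreement with \(\pi^n\) on finitely supported functions) goes through as you describe.
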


\begin{proof}
  Let \(\psi\in I^n\subseteq\tilde{I}^n\).  Since~\(\psi\) is a
  smooth element, it is fixed by some compact open pro-\(p\)
  subgroup~\(U\) of~\(G\).  Let \(\sigma\in\Buil\).  The set of \(g\in
  G\) with \(\av{U} g e^n_\sigma\neq0\) is compact modulo~\(Z(G)\)
  because~\(V\) is uniformly cuspidal (see
  \cite{Vigneras:l-modulaires}*{II.2.16}).  Since
  \(\psi(g\sigma)\in e^n_{g\sigma} V = g e^n_{\sigma} g^{-1} V = g
  e^n_{\sigma} V\), the set of \(g\in G\) with
  \(\av{U}\psi(g\sigma)\neq0\) is compact modulo~\(Z(G)\).  Thus
  \(\av{U}\psi(g\sigma)=0\) for all but finitely many elements in the
  orbit~\(G\sigma\) because the stabiliser~\(\StabS_\sigma\)
  of~\(\sigma\) contains~\(Z(G)\).  Since \(G\backslash\Buil\) is
  finite, the sum
  \[
  \bar\pi^n_U(\psi) \defeq \sum_{\sigma\in\Buil} \av{U}\psi(\sigma)
  \]
  is finite.  If~\(\psi\) has finite support, then we may
  pull~\(\av{U}\) out of the sum and get \(\bar\pi^n_U(\psi) =
  \av{U}\pi^n(\psi) = \pi^n(\av{U}\psi) = \pi^n(\psi)\)
  because~\(\pi^n\) is \(G\)\nb-equivariant.

  We claim that~\(\bar\pi^n_U\) does not depend on~\(U\), so we get
  a well-defined map \(\bar\pi^n\colon I^n\to V\).  Taking this
  for granted, it is routine to check that~\(\bar\pi^n\) is a
  \(G\)\nb-equivariant \(R\)\nb-module homomorphism.  We checked
  already that it extends~\(\pi^n\), so this finishes the proof.

  It remains to prove that~\(\bar\pi^n\) does not depend on~\(U\).
  If \(U_1,U_2\) are compact open pro-\(p\) subgroups fixing~\(\psi\),
  then there is an open subgroup \(U_3\subseteq U_1\cap U_2\) which is
  normal in~\(U_1\).  And~\(U_3\) contains an open subgroup~\(U_4\)
  that is normal in~\(U_2\) and hence in~\(U_3\).  Thus we are done if
  we show that \(\bar\pi^n_U = \bar\pi^n_{U'}\) if~\(U\) is a
  normal open subgroup of~\(U'\).  Then \(\av{U'} =
  [U':U]^{-1}\sum_{g\in U'/U} \av{U}g\).  We compute
  \begin{align*}
    \bar\pi^n_{U'}(\psi)
    &= \frac{1}{[U':U]}\sum_{\twolinesubscript{g\in U'/U}{\sigma\in\Buil}}
    \av{U}g\cdot (\psi(\sigma))
    = \frac{1}{[U':U]}\sum_{\twolinesubscript{g\in U'/U}{\sigma\in\Buil}}
    \av{U}g\cdot (\psi(g^{-1}\sigma))
    \\&= \frac{1}{[U':U]}\sum_{\twolinesubscript{g\in U'/U}{\sigma\in\Buil}}
    \av{U} (g\psi)(\sigma)
    = \bar\pi^n_U(\psi);
  \end{align*}
  the second step reindexes the sum over~\(\sigma\); the third step is
  the definition of the \(G\)\nb-action on~\(I^n\), and the last
  step uses \(g\psi=\psi\) for \(g\in U'\).
\end{proof}

Lemma~\ref{lem:alpha} shows that \(\bar\pi^n\circ \alpha^n =
\pi^n\circ \alpha^n\) is idempotent with image
\(\sum_{x\in\Buil^0} e^n_x V\).  If \(V=\sum_{x\in\Buil^0} e^n_x
V\), then Lemmas \ref{lem:alpha} and~\ref{lem:extend_pi} show
that~\(V\) is cmc-injective because it is a direct summand in the
cmc-injective smooth representation~\(I^n\).

In general, let \(V^n\defeq
(\pi^n\alpha^n-\pi^{n-1}\alpha^{n-1}) V\) for \(n\in\N\).  These
are direct summands in~\(V\) with \(V=\bigoplus_{n\in\N} V^n\).  We
claim that any element of \(\prod_{n\in\N} V^n\) that is smooth for
the \(G\)\nb-action already belongs to the direct sum
\(\bigoplus_{n\in\N} V^n\).  Indeed, a smooth vector is in the image
of~\(e^{m}_x\) for some \(m\in\N\) because these form an approximate
unit, and then it is killed by
\(\pi^n\alpha^n-\pi^{n-1}\alpha^{n-1}\) for \(n>m\).  Hence~\(V\)
is also the \emph{product} of the~\(V^n\) in the category of smooth
representations.  The proof above shows that the factors~\(V^n\) are
cmc-injective.  So is~\(V\) because products of cmc-injective
representations remain cmc-injective.  This finishes the proof of
Theorem~\ref{the:cmc-injective}.

\subsection{Enough c-injective cuspidal representations}
\label{sec:enough_c-injective}

\begin{theorem}
  \label{the:enough_c-injective}
  For any cuspidal representation~\(W'\) in~\(\Rep_R(G)\) there is a
  c-split extension \(W'\into W\prto W''\) with cuspidal and
  c-injective~\(W\).
  Cuspidal representations of~\(\Gz\) are c-injective
  in~\(\Rep_R(\Gz)\).
\end{theorem}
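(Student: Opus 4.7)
The plan is to mirror Section~\ref{sec:enough_c-projective}, replacing compact induction by smooth induction (the right adjoint to restriction), and to prove the two assertions in the reverse of the order stated: first the \(\Gz\)-statement, then the construction for~\(G\).

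To show that cuspidal representations of~\(\Gz\) are c-injective, I would repeat the proof of Theorem~\ref{the:cmc-injective} with \(\Gz\) in place of~\(G\). Since \(\Gz[Z]\) is compact, every subgroup of~\(\Gz\) compact modulo \(\Gz[Z]\) is already compact, so cmc-split and c-split extensions of \(\Gz\)-representations coincide. The facet stabilisers \(\Gz\cap G_\sigma\) are compact; although the orbit space \(\Gz\backslash\Buil\) is now infinite (each \(G\)-orbit decomposes into a \((G/\Gz)\)-indexed family of \(\Gz\)-orbits), the product \(I^n = \prod_{\sigma\in\Buil} e^n_\sigma V\) used in that proof is a product of c-injective factors in the smooth category and so remains c-injective, and the maps \(\alpha^n,\bar\pi^n\) and the decomposition \(V=\bigoplus V^n\) go through without change.

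For the first assertion I would take \(W\) to be the smooth induction of \(W'|_{\Gz}\) from~\(\Gz\) to~\(G\), i.e., the space of smooth functions \(\psi\colon G\to W'\) with \(\psi(hg)=h\psi(g)\) for \(h\in\Gz,\ g\in G\), with \(G\)-action \((g\psi)(x)=\psi(xg)\). The unit of the adjunction \(i(v)(g)\defeq g\cdot v\) is a \(G\)-equivariant injection \(W'\hookrightarrow W\), and evaluation at the identity \(\psi\mapsto\psi(1)\) is a \(\Gz\)-equivariant retraction; since every compact subgroup of~\(G\) lies in~\(\Gz\), this retraction makes the inclusion c-split. c-injectivity of~\(W\) then follows from the adjunction
\[
\Hom_G(V,W)\cong\Hom_{\Gz}(V|_{\Gz},W'|_{\Gz})
\]
combined with the previous step applied to~\(W'|_{\Gz}\), since restriction sends c-split \(G\)-extensions to c-split \(\Gz\)-extensions.

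The main obstacle will be cuspidality of~\(W\). By the Mackey-style decomposition for smooth induction from the normal open subgroup~\(\Gz\), \(W|_{\Gz}\) is isomorphic to the smooth part of \(\prod_{[g]\in G/\Gz}W'|_{\Gz}\) with \(\Gz\) acting componentwise (the conjugation twists on the factors are trivialised using the ambient \(G\)-action on~\(W'\)), and Proposition~\ref{pro:cuspidal_criterion_circ} reduces cuspidality of~\(W\) to cuspidality of this product. The naive support set of the action of \(y=\av{K_0}\) on a smooth vector~\(\psi\) is a union, over the infinite index set~\(G/\Gz\), of compact subsets of~\(\Gz\) and is a priori unbounded. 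The resolution is to invoke the \emph{uniform} cuspidality of~\(W'\) (Vign\'eras~\cite{Vigneras:l-modulaires}*{II.2.16}): the set \(\{h\in\Gz:\av{K_0}h\av{K}\text{ acts nontrivially on }W'\}\) is compact and depends only on the pair \(K_0,K\), not on any particular vector. Every component of a smooth~\(\psi\) lies in~\((W')^K\) for a common~\(K\), so each per-component support set is contained in this uniform compact subset; their union is therefore contained in it as well, giving compact support and hence cuspidality.
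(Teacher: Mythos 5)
Your proposal follows the paper's route exactly: restrict \(W'\) to \(\Gz\), apply smooth induction back to \(G\), split the resulting inclusion by evaluation at the identity (which is \(\Gz\)\nb-equivariant and hence a c\nb-splitting, as all compact subgroups lie in \(\Gz\)), and deduce c\nb-injectivity of \(W\) from the induction--restriction adjunction together with c\nb-injectivity of cuspidal \(\Gz\)\nb-representations, the latter obtained by rerunning the proof of Theorem~\ref{the:cmc-injective} over~\(\Gz\). Your treatment of the cuspidality of~\(W\) is a correct and worthwhile addition that the paper leaves implicit: unlike the compactly induced \(\bigoplus_{G/\Gz}W''\) of Section~\ref{sec:enough_c-projective}, a smooth vector of the smoothly induced representation may have infinitely many non-zero components, so one really does need the vector-independent compact set provided by uniform cuspidality (\cite{Vigneras:l-modulaires}*{II.2.16}) to bound the union of the per-component supports. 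One factual slip: \(\Gz\backslash\Buil\) is \emph{finite}, not infinite --- every stabiliser \(\StabS_\sigma\) contains \(Z(G)\), and \(\Gz Z(G)\) has finite index in~\(G\), so each \(G\)\nb-orbit splits into at most \([G:\Gz Z(G)]\) many \(\Gz\)\nb-orbits. This is not a harmless aside: if the orbit space genuinely were infinite, the \(\Gz\)\nb-analogue of \(\bar\pi^n\) in Lemma~\ref{lem:extend_pi} would not obviously be a finite sum, since it is precisely the finiteness of the orbit space that reduces the sum over~\(\Buil\) to finitely many non-zero terms; ``goes through without change'' would then be a gap. As it stands, the finiteness of \(\Gz\backslash\Buil\) makes that step valid, and your proof is correct.
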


\begin{proof}
  This is analogous to the proofs of Theorem
  \ref{the:enough_c-projective} and~\ref{the:c-projective_Gz}.
  For the first statement, we restrict the
  representation on~\(W'\) to~\(\Gz\) and now apply ordinary
  induction to define~\(W\); this produces a c-injective
  representation of~\(G\) because induction is right adjoint to
  restriction.  There is a canonical \(G\)\nb-equivariant map \(W'
  \to W\), which splits \(\Gz\)\nb-equivariantly by restriction to
  \(\Gz\subseteq G\).  Letting \(W''\defeq W/W'\), we get the
  desired c-split extension.
\end{proof}

\begin{corollary}
  \label{cor:banal_field_injective}
  Let~\(R\) be a good field.  Any cuspidal representation
  in~\(\Rep_R(G)\) is contained in a cuspidal, injective
  representation.
\end{corollary}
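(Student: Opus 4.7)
The plan is to deduce the corollary directly from Theorem~\ref{the:enough_c-injective} by showing that, over a good field, the notion of c\nb-injective collapses to the ordinary notion of injective. This is the exact injective analogue of the argument for Corollary~\ref{cor:projective_field}, so the structure of the proof will mirror that one.

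First, I would repeat the averaging observation used in Corollary~\ref{cor:projective_field}: over a good field~\(R\), any extension \(W'\into W\prto W''\) in \(\Rep_R(G)\) is automatically c\nb-split exact. Indeed, an \(R\)\nb-linear section always exists because \(R\) is a field, and given any compact subgroup \(U\subseteq G\) we can make the section \(U\)\nb-equivariant by averaging over the normalised Haar measure on~\(U\); this measure takes values in~\(R\) because the indices \([U:U']\) for open subgroups \(U'\subseteq U\) are units in~\(R\) by the good-field hypothesis. Consequently the class of c\nb-split extensions coincides with the class of all extensions, and hence c\nb-injective objects in \(\Rep_R(G)\) are the same as injective objects.

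Next, I would apply Theorem~\ref{the:enough_c-injective} to the given cuspidal representation~\(W'\). That theorem produces a c\nb-split extension \(W'\into W\prto W''\) in which~\(W\) is cuspidal and c\nb-injective. By the previous paragraph,~\(W\) is then injective in \(\Rep_R(G)\), and it contains~\(W'\) as a subrepresentation. This furnishes the required embedding of~\(W'\) into a cuspidal, injective representation, completing the proof.

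There is no real obstacle here; the corollary is a formal consequence of the good-field hypothesis together with Theorem~\ref{the:enough_c-injective}. The only point that deserves explicit mention is the averaging step that identifies c\nb-injectivity with injectivity, since this is where the assumption on the characteristic of~\(R\) is used.
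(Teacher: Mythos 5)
Your proposal is correct and follows exactly the route the paper intends: the paper's proof is just a pointer to the proof of Corollary~\ref{cor:projective_field}, i.e.\ the same averaging argument showing that over a good field every extension is c\nb-split, so c\nb-injective equals injective, combined with Theorem~\ref{the:enough_c-injective}. Nothing is missing.
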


\begin{proof}
  See the proof of Corollary~\ref{cor:projective_field}.
\end{proof}

\begin{theorem}
  \label{the:split_rep}
  Let~\(R\) be a good field.  The category~\(\Rep_R(G)\) is the
  product of the two subcategories of cuspidal and subinduced
  representations.
\end{theorem}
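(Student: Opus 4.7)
My plan is to show that every $V\in\Rep_R(G)$ admits a natural direct sum decomposition $V=V_c\oplus V_s$ with $V_c$ cuspidal and $V_s$ subinduced; together with the vanishing of $\Hom$ in either direction between cuspidal and subinduced representations (Theorem~\ref{the:cuspidal_induced_orthogonal}), this yields the product decomposition of categories.

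First I will isolate the cuspidal part.  Define $V_c\subseteq V$ as the intersection of the kernels of the canonical adjunction-unit maps $V\to i^G_P r^G_L V$, where $P$ ranges over representatives of the (finitely many) conjugacy classes of proper parabolic subgroups of $G$ with Levi~$L$, $r^G_L$ denotes parabolic restriction, and $i^G_P$ denotes parabolic induction.  Parabolic restriction is left adjoint to parabolic induction (as noted in the proof of Theorem~\ref{the:cuspidal_induced_orthogonal}), and both functors are exact.  A short diagram chase using naturality of the unit then shows simultaneously that $V_c$ is cuspidal and that any cuspidal subrepresentation of $V$ is contained in $V_c$; moreover the induced map
\[
V/V_c \into \bigoplus_P i^G_P r^G_L V
\]
is injective, and because the direct sum is finite this exhibits $V/V_c$ as a subrepresentation of a direct sum of parabolically induced representations, hence as subinduced.

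Next I will split the exact sequence $V_c\into V\prto V/V_c$.  By Corollary~\ref{cor:banal_field_injective}, $V_c$ embeds into a cuspidal representation $J$ that is injective in $\Rep_R(G)$, the point being that over a good field the averaging argument from the proof of Corollary~\ref{cor:projective_field} turns the $c$\nb-injectivity of Theorem~\ref{the:enough_c-injective} into honest injectivity.  Using injectivity of $J$, extend the inclusion $V_c\into J$ to a $G$\nb-equivariant map $\rho\colon V\to J$.  Since $J$ is cuspidal and cuspidality passes to quotients (by exactness of parabolic restriction), $J/V_c$ is cuspidal, so the induced map $\bar\rho\colon V/V_c\to J/V_c$ goes from a subinduced to a cuspidal representation and must vanish by Theorem~\ref{the:cuspidal_induced_orthogonal}.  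Thus $\rho$ in fact takes values in $V_c\subseteq J$, giving a retraction of $V_c\into V$, and I obtain the decomposition $V=V_c\oplus\ker(\rho)$ with $\ker(\rho)\cong V/V_c$ subinduced.

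The main obstacle I expect is the identification of $V_c$ with the kernel of the system of adjunction units together with the corresponding verification that $V/V_c$ is subinduced; this is where the exactness of the parabolic functors and the finiteness of the set of conjugacy classes of parabolic subgroups enter crucially, and where one must be careful to use a direct sum over a finite index set rather than a direct product.  Once this is in place, Theorem~\ref{the:cuspidal_induced_orthogonal} and the enough-injectives statement of Corollary~\ref{cor:banal_field_injective} produce the splitting by a formal argument, and the product decomposition of categories follows from the naturality of $V=V_c\oplus V_s$ combined with the mutual $\Hom$-orthogonality between cuspidal and subinduced representations.
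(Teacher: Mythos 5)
Your proposal is correct and follows essentially the same route as the paper: you form the kernel of the adjunction-unit map into $\bigoplus_P i_P r_P(V)$ (the paper's $V'=\ker(\beta)$), observe that the quotient embeds in a finite direct sum of parabolically induced representations, and split the extension by mapping into an injective cuspidal envelope and killing the induced map on the subinduced quotient via Theorem~\ref{the:cuspidal_induced_orthogonal}. The only (harmless) variation is that you establish cuspidality of the kernel by the triangle identity and exactness of $r_P$ rather than by the paper's direct inspection of vectors annihilated by averaging over unipotent subgroups.
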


\begin{proof}
  Theorem~\ref{the:cuspidal_induced_orthogonal} shows that there are
  no arrows in either direction between the subcategories of cuspidal
  and subinduced representations.  It remains to show that every
  representation is a product of a cuspidal and a subinduced
  representation.

  Let~\(S\) be the set of proper standard parabolic subgroups
  of~\(G\).  Let \(i_P\) and~\(r_P\) for \(P\in S\) be the parabolic
  induction and restriction functors.  The right adjointness
  of~\(i_P\) to~\(r_P\) gives natural maps \(\beta_P\colon V\to i_P
  r_P(V)\) for \(V\in\Rep_R(G)\), which we put together into a natural
  map \(\beta\colon V\to \bigoplus_{P\in S} i_P r_P(V)\).  Let
  \(V'\defeq \ker(\beta)\), \(V''\defeq \im(\beta)\).  These form an
  extension \(V'\into V\prto V''\) in~\(\Rep_R(G)\) with
  subinduced~\(V''\).  Inspection shows that \(\beta(v)=0\) for \(v\in
  V\) if and only if for each unipotent subgroup \(N\subseteq G\)
  there is a compact open subgroup~\(U_N\) with \(\av{U_N} v=0\);
  thus~\(V'\) is cuspidal.

  We may embed~\(V'\) in an injective, cuspidal representation~\(W\)
  by Corollary~\ref{cor:banal_field_injective}.  The embedding
  \(i\colon V' \to W\) extends to a map \(\varphi\colon V\to W\)
  because~\(W\) is injective.  Since \(\varphi(V') = i(V')\),
  \(\varphi\) induces a map \(\dot\varphi\colon V''\to W/V'\).  The
  quotient~\(W/V'\) is again cuspidal, so
  Theorem~\ref{the:cuspidal_induced_orthogonal} gives
  \(\dot\varphi=0\).  Thus \(\varphi(V)\subseteq i(V')\),
  so~\(\varphi\colon V\to V'\) is a section for our extension.  Hence
  \(V\cong V'\oplus V''\).
\end{proof}

\begin{bibdiv}
  \begin{biblist}
\bib{Meyer-Solleveld:Resolutions}{article}{
  author={Meyer, Ralf},
  author={Solleveld, Maarten},
  title={Resolutions for representations of reductive $p$\nobreakdash -adic groups via their buildings},
  date={2010},
  journal={J. Reine Angew. Math.},
  volume={647},
  pages={115--150},
  review={\MRref {2729360}{2011m:22031}},
  doi={10.1515/CRELLE.2010.075},
}

\bib{Renard:Representations}{book}{
  author={Renard, David},
  title={Repr\'esentations des groupes r\'eductifs $p$\nobreakdash -adiques},
  series={Cours Sp\'ecialis\'es},
  volume={17},
  publisher={Soci\'et\'e Math\'ematique de France},
  place={Paris},
  date={2010},
  pages={vi+332},
  isbn={978-2-85629-278-5},
  review={\MRref {2567785}{}},
}

\bib{Schneider-Stuhler:Rep_sheaves}{article}{
  author={Schneider, Peter},
  author={Stuhler, Ulrich},
  title={Representation theory and sheaves on the Bruhat--Tits building},
  journal={Inst. Hautes \'Etudes Sci. Publ. Math.},
  number={85},
  date={1997},
  pages={97--191},
  issn={0073-8301},
  review={\MRref {1471867}{98m:22023}},
  eprint={http://www.numdam.org/item?id=PMIHES_1997__85__97_0},
}

\bib{Vigneras:l-modulaires}{book}{
  author={Vign\'eras, Marie-France},
  title={Repr\'esentations $l$\nobreakdash -modulaires d'un groupe r\'eductif $p$\nobreakdash -adique avec $l\neq p$},
  series={Progress in Mathematics},
  volume={137},
  publisher={Birkh\"auser Boston Inc.},
  place={Boston, MA},
  date={1996},
  pages={xviii and 233},
  isbn={0-8176-3929-2},
  review={\MRref {1395151}{97g:22007}},
}

\bib{Vigneras:Cohomology}{article}{
  author={Vign\'eras, Marie-France},
  title={Cohomology of sheaves on the building and $R$\nobreakdash -representations},
  journal={Invent. Math.},
  volume={127},
  date={1997},
  number={2},
  pages={349--373},
  issn={0020-9910},
  review={\MRref {1427623}{98k:20079}},
  doi={10.1007/s002220050124},
}
  \end{biblist}
\end{bibdiv}
\end{document}